\newcommand*{\N}{\mathbb{N}}
\newcommand*{\Z}{\mathbb{Z}}
\newcommand*{\R}{\mathbb{R}}
\newcommand*{\V}{\mathbb{V}}
\newcommand*{\std}{\mathrm{std}}
\newcommand*{\vol}{\mathrm{vol}}
\newcommand*{\bb}{\mathrm{b}}
\newcommand*{\p}{\mathrm{p}}
\newcommand*{\pb}{\mathrm{pb}}
\DeclareMathOperator{\reg}{reg}
\newcommand*{\calL}{\mathcal{L}}
\newcommand*{\calC}{\mathcal{C}}
\newcommand*{\calZ}{\mathcal{Z}}
\newcommand*{\frakB}{\mathfrak{B}}
\newcommand*{\frakX}{\mathfrak{X}}
\newcommand*{\rme}{\mathrm{e}}
\newcommand*{\rmf}{\mathrm{f}}
\newcommand*{\define}[1]{\textbf{#1}}
\DeclarePairedDelimiter\abs{\lvert}{\rvert}
\DeclarePairedDelimiter\norm{\lVert}{\rVert}
\DeclarePairedDelimiterX\innerp[2]{\langle}{\rangle}{#1,#2}
\DeclarePairedDelimiterX\intervCO[2]{[}{[}{#1,#2}
\DeclarePairedDelimiterX\intervOC[2]{]}{]}{#1,#2}
\DeclarePairedDelimiterX\intervCC[2]{[}{]}{#1,#2}
\DeclarePairedDelimiterX\intervOO[2]{]}{[}{#1,#2}
\DeclarePairedDelimiter\floor{\lfloor}{\rfloor}
\DeclareMathOperator{\id}{id}
\DeclareMathOperator{\pr}{pr}
\let\div\relax\DeclareMathOperator{\div}{div}
\DeclareMathOperator{\grad}{\nabla}
\DeclareMathOperator{\Hess}{Hess}
\DeclareMathOperator{\lapla}{\Delta}
\DeclareMathOperator{\levici}{\nabla^{\textsc{lc}}}
\DeclareMathOperator{\Reg}{Reg}
\DeclareMathOperator{\extdiff}{d}
\DeclareMathOperator{\Lie}{\calL}
\DeclareMathOperator{\trace}{tr}
\DeclareMathOperator{\interior}{int}
\DeclareMathOperator{\Ric}{Ric}
\newcommand{\extpow}[1]{\mathop{\bigwedge\nolimits^{#1}}}
\newcommand{\sympow}[1]{\mathop{\bigodot\nolimits^{#1}}}
\newtheorem{theorem}{Theorem}[section]
\newtheorem*{theorem*}{Theorem}
\newtheorem{lemma}[theorem]{Lemma}
\newtheorem{proposition}[theorem]{Proposition}
\theoremstyle{definition}
\newtheorem{definition}[theorem]{Definition}
\theoremstyle{remark}
\newtheorem{remark}[theorem]{Remark}
\newtheorem*{remarks*}{Remarks}
\newtheorem{example}[theorem]{Example}
\newcommand\blfootnote[1]{%
  \begingroup
  \renewcommand\thefootnote{}\footnote{#1}%
  \addtocounter{footnote}{-1}%
  \endgroup
}
\author{Beno\^{\i}t Jubin}
\title{Intrinsic volumes of sublevel sets}
\date{\today}
\begin{document}

\maketitle

\begin{abstract}
We\blfootnote{
Keywords: intrinsic volume, curvature measure, Lipschitz--Killing curvature, Euler--Poincaré characteristic, sublevel set, excursion set, nodal set, nodal volume, Kac--Rice formula.}
establish formulas that give the intrinsic volumes, or curvature measures, of sublevel sets of functions defined on Riemannian manifolds as integrals of functionals of the function and its derivatives.
For instance, in the Euclidean case, if $f \in \calC^3(\R^n, \R)$ and 0 is a regular value of $f$, then the intrinsic volume of degree $n-k$ of the sublevel set $M_f^0 \coloneqq f^{-1}(\intervOC{-\infty}{0})$, if the latter is compact, is given by
\begin{equation*}
\calL_{n-k}(M_f^0) =
\frac{\Gamma(k/2)}{2 \pi^{k/2} (k-1)!}
\int_{M^0} \div \left( \frac{P_{n, k}(\Hess(f), \grad f)}{\sqrt{f^{2(3k-2)} + \norm{\grad f}^{2(3k-2)}}} \grad f \right) \vol_n
\end{equation*}
for $1 \leq k \leq n$, where the $P_{n, k}$'s are polynomials given in the text.

This includes as special cases the Euler--Poincaré characteristic of sublevel sets and the nodal volumes of functions defined on Riemannian manifolds.
Therefore, these formulas give what can be seen as generalizations of the Kac--Rice formula.

Finally, we use these formulas to prove the Lipschitz continuity of the intrinsic volumes of sublevel sets.
\end{abstract}

\tableofcontents

\section*{Introduction}
\label{sec:intro}
\addcontentsline{toc}{section}{\nameref{sec:intro}}

Intrinsic volumes are geometric invariants attached to well-behaved subsets of Riemannian manifolds.
They include the volume and the Euler--Poincaré characteristic.
Among their applications in the field of integral geometry are Weyl's tube formula (\cite{weyl}), that gives the volumes of tubular neighborhoods of submanifolds, and the kinematic formula of Blaschke, Chern and Santal\'o (\cite{chern-kinematic, chern-kinematic2}), that gives the volume of the Minkowski sum of two convex bodies.
They were introduced in their modern form by Herbert Federer in the seminal article~\cite{federer}, where they are called curvature measures, after special cases in convex geometry were treated by Hermann Minkowski.
Among the vast literature on their subject, we only mention the book~\cite{morvan}, the survey on a related topic~\cite{thale}, and the articles~\cite{fu-chern, fu-wannerer, zahle-current}.

In this article, we study the intrinsic volumes of sublevel sets of functions defined on Riemannian manifolds.
These were already studied from the point of view of Morse theory in~\cite{fu}.
Since intrinsic volumes include the volume of the boundary, this study encompasses volumes of level sets, and in particular of zero sets, also called nodal sets.
The first closed explicit formulas computing nodal volumes appeared in~\cite{angst}, which was a motivation for the present article.
These formulas can be seen as generalizations of the so-called Kac--Rice formula (see for instance~\cite{nicolaescu}).

Sublevel sets are also studied in probability theory, where superlevel sets of random fields are called excursion sets; see for instance the books~\cite{adler, azais} and the articles~\cite{angst, lachieze, zahle}.
The importance of the formulas obtained in this paper for the study of random fields (as studied in~\cite{angst}), compared to existing Kac--Rice formulas, stems from the fact that they are in ``closed form'' as opposed to being limits of integrals depending on a parameter.

\paragraph{Main results}
We now describe the contents of this article in more detail.
In this introduction, we restrict ourselves to the flat case.
In Section~\ref{sec:volumes}, we recall the definition and main properties of intrinsic volumes.
If~$N$ is a flat compact $n$-dimensional Riemannian manifold with boundary, they take the form
\begin{equation}
\calL_{n-k}(N) =
b_k \int_{\partial N} \trace \left( \extpow{k-1} S \right) \vol_{\partial N}
\end{equation}
for $0 \leq k \leq n$, where $b_k \in \R$ and $S$ is the second fundamental form of~$\partial N$ in~$N$.

In Section~\ref{sec:sublevel}, we specialize our study to the case where $N$ is a sublevel set.
Namely, let~$M$ be a flat $n$-dimensional Riemannian manifold (without boundary), let $f \in \calC^3(M, \R)$, and assume that $a \in \R$ is a regular value of~$f$ and that the sublevel set $M^a \coloneqq f^{-1}(\intervOC{-\infty}{a})$ is compact.
The second fundamental form of~$\partial M^a$ in~$M$ can be expressed in terms of the gradient and the Hessian of~$f$.
An important lemma (Lemma~\ref{lem:poly}) establishes that the above integrand is then a polynomial in $\grad f$ and $\Hess(f)$ divided by $\norm{\grad f}^{2(k-1)}$.
We then use the divergence theorem to transform the above integral over $\partial M^a$ into an integral over $M^a$.
This leads to our main formula which, in the flat case, reads
\begin{equation}
\calL_{n-k}(M^a) =
b_k 
\int_{M^a} \div \left( \frac{P_{n, k}(\Hess(f), \grad f)}{\sqrt{(f-a)^{2(3k-2)} + \norm{\grad f}^{2(3k-2)}}} \grad f \right) \vol_M
\end{equation}
for $1 \leq k \leq n$, where the $P_{n, k}$'s are polynomials given in the text (Theorem~\ref{thm:main2}).
The main advantage of this formula is that it is an explicit integral over $M^a$ (and not $\partial M^a$) of a continuous functional in $f$ and its derivatives up to order~3.

Since the intrinsic volume of degree $n-1$ is half the volume of the boundary, this formula can be used to compute the volume of level sets.
If the ambient manifold $M$ is compact, one can use the intrinsic volume of either the sublevel or the superlevel set, yielding for the volume of the zero set $\calZ_f$ of $f$ the formula
\begin{equation}
\vol(\calZ_f) = \frac12 \int_M
\frac{\sigma_f}{\eta_f^3} \left(
f \norm{\grad f}^2 + \Hess(f)(\grad f, \grad f)
- \eta_f^2 \lapla f \right)
\vol_M
\end{equation}
where $\sigma_f$ is the sign of $f$ and $\eta_f \coloneqq \sqrt{f^2 + \norm{\grad f}^2}$.
This formula was obtained in the case of a flat torus in~\cite{angst}.
As in~\cite{angst}, one can do an integration by parts to eliminate $\sigma_f$ and obtain an integral over~$M$ where the integrand is a Lipschitz continuous functional of $f$ and its derivatives up to order~2 (Equation~\eqref{eq:lipschitz}).
This regularity allows one to apply techniques of the Malliavin calculus to obtain results about the expected value, variance and higher moments of the nodal volumes of certain families of random fields (see~\cite{angst}), and more generally of the intrinsic volumes of their excursion sets.

In Section~\ref{sec:continuity}, after recalling basic facts on natural topologies on $\calC^p(M, \R)$, the uniform and the (Whitney) strong $\calC^p$-topologies, we prove that conditions needed to establish our formulas (regularity of the value and compactness of the sublevel sets) are generic.
Then, we prove the continuity of intrinsic volumes of sublevel sets.
For instance, if $0 \leq k \leq n$, then the function
\begin{align}
\calL_{n-k}^\mathrm{sub} \colon \Reg_\pb^3(M, \R)_U &\longrightarrow \R\\
(f, a) & \longmapsto \calL_{n-k}(M^a_f)\notag
\end{align}
is Lipschitz continuous, where the domain is the set of couples $(f, a)$ where $f \in \calC^3(M, \R)$ is proper bounded below and $a \in \R$ is a regular value of $f$, and is equipped with the uniform $\calC^3$-topology (Theorem~\ref{thm:continuity}).
In particular, the Euler--Poincaré characteristic of sublevel sets is locally constant.

\paragraph{Conventions and notation}

\begin{itemize}
\item
If $P$ is a proposition, then $[P] \coloneqq 1$ if $P$ else 0.
\item
We denote by $\pr_i$ the projection on the $i\textsuperscript{th}$ factor of a direct product.
\item
The bracket $\floor{-} \colon \R \to \Z$ denotes the floor function.
\item
If $a, b \in \R$, then $\llbracket a, b \rrbracket \coloneqq \Z \cap \intervCC{a}{b}$.
\item
If $a \in \R$, then $\overline{\N_{\geq a}} \coloneqq \{ n \in \N \mid n \geq a \} \cup \{\infty\}$ and similarly for similar symbols.
\item
The symbol $\bigodot$ (resp.\ $\bigwedge$) denotes the symmetric (resp.\ exterior) product or power of vector spaces.
\item
Unless otherwise specified, manifolds are Hausdorff, paracompact, real, finite-dimensional, and smooth, that is, of class $\calC^\infty$.
\item
The space of smooth sections of the vector bundle $E \to M$ is denoted by $\Gamma(E \to M)$ or $\Gamma^{(p)}(E \to M)$ if the differentiability class $p \in \overline{\N}$ need be specified.
For instance, the metric tensor of a Riemannian manifold~$M$ is an element of $\Gamma(\sympow{2}T^*M \to M)$.
\end{itemize}

\section{Intrinsic volumes}
\label{sec:volumes}

Let~$(M, g)$ be an $n$-dimensional compact Riemannian manifold with boundary.
Its metric will also be denoted by $\innerp{-}{-}$ and the associated norm by $\norm{-}$.
We denote by $\levici$ its Levi-Civita connection.
Let $\vol_M$ be the Riemannian density on $M$ and $\vol_{\partial M}$ be the induced density on $\partial M$.
The symbol $\vol$ will also denote the associated volume of (sub)manifolds.

Let~$R \in \Gamma(\sympow{2} \extpow{2}T^*M \to M)$ be the covariant curvature tensor of~$M$.
Let $S \coloneqq ((\levici \nu|_{T\partial M})^T)^\flat \in \Gamma(\sympow{2} T^*\partial M \to \partial M)$ be the second fundamental form of~$\partial M$ in~$M$, where $\nu \in \Gamma(TM|_{\partial M} \to \partial M)$ is the outward unit normal vectorfield on~$\partial M$ and $(-)^T \colon TM|_{\partial M} \to T\partial M$ denotes the tangential component, and $\flat \colon T\partial M \to T^*\partial M$ denotes the musical isomorphism induced by the metric.
The symbol $\trace$ denotes the trace of a bilinear form.

For the exterior product of symmetric bilinear forms, $\wedge \colon \sympow{2}\extpow{p}\V \times \sympow{2}\extpow{q}\V \to \sympow{2}\extpow{p+q}\V$, also called in differential geometry the Kulkarni--Nomizu product, see for instance~\cite[§2]{federer}.

For $0 \leq k \leq n$, the \define{intrinsic volume of degree~$n-k$ of~$M$} is defined as
\begin{multline}
\label{eq:volumes}
\calL_{n-k}(M) \coloneqq
a_k \int_{M} \trace\left( \extpow{k/2} R \right) \vol_M + {}\\
\sum_{m=0}^{\floor*{\frac{k-1}{2}}} b_{k, m} \int_{\partial M} \trace \left( \extpow{m} R|_{\partial M} \wedge \extpow{k-1-2m} S \right) \vol_{\partial M}
\end{multline}
where
\begin{align}
a_k &\coloneqq \frac{[k \text{ even}]}{(-2\pi)^{k/2} \: (k/2)!} &\text{for } 0 \leq k,\\
b_{k, m} &\coloneqq \frac{(-1)^m \Gamma(k/2 - m)}{2^{m+1} \pi^{k/2} m! (k-1-2m)!} &\text{for } 0 \leq m \leq \floor*{\frac{k-1}2}.
\intertext{We also set}
b_k &\coloneqq b_{k, 0} = \frac{\Gamma(k/2)}{2 \pi^{k/2} (k-1)!} &\text{for } 1 \leq k.
\end{align}

\begin{remark}
Formula~\eqref{eq:volumes} was obtained by specializing the general definition~\cite[Def.~10.7.2]{adler}, which holds for Whitney stratified spaces ``of positive reach'' in Riemannian manifolds (including Riemannian manifolds with corners), to the case of Riemannian manifolds with boundary.
The integrands, which are contractions of the curvature tensor and the second fundamental form, are called the \define{Lipschitz--Killing curvatures} of~$\partial M$ in~$M$.
More general versions $\calL_{n-k}(M, A)$ can be defined for Borel subsets $A \subseteq M$ and are called \define{curvature measures} in~$M$.
The intrinsic volumes are the total measures of these curvature measures, that is, $\calL_{n-k}(M) = \calL_{n-k}(M, M)$.
\end{remark}

One has
\begin{align}
\calL_n(M) &= \vol(M),\\
\label{eq:vol-boundary}
\calL_{n-1}(M) &= \frac12 \vol(\partial M),\\
\calL_0(M) &= \chi(M).
\end{align}
The first two equalities immediately follow from $a_0 = 1$ and from $a_1 = 0$ and $b_1 = \frac12$ respectively (indeed, $\trace(\extpow{0}R_x)$ is the trace of the identity of $\extpow{0}T^*_xM \simeq \R$, which is~1, and similarly for $\trace(\extpow{0}S_x)$).
The third equality is the Gauss--Bonnet--Chern theorem (see~\cite{chern, chern-curvatura} for the original articles, and~\cite{spivak} for manifolds with boundary), where the right-hand side is the Euler--Poincaré characteristic of~$M$, and in particular is an integer and is zero in the odd-dimensional boundaryless case.

Since $a_2 = -b_2 = (2\pi)^{-1}$, one has
\begin{equation}
\calL_{n-2}(M) =
- \frac1{2\pi} \int_{M} \operatorname{scal} \: \vol_M + \frac1{2\pi} \int_{\partial M} (\trace S) \: \vol_{\partial M}
\end{equation}
where $\operatorname{scal}$ denotes the scalar curvature of~$M$.
Note that $\trace S$ is $(n-1)$~times the mean curvature of~$\partial M$ in~$M$.

If $M$ is flat, that is, $R = 0$, then Formula~\eqref{eq:volumes} simplifies, since only the summand corresponding to $m = 0$ may be nonzero, giving
\begin{equation}
\label{eq:vol-flat}
\calL_{n-k}(M) =
b_k \int_{\partial M} \trace \left( \extpow{k-1} S \right) \vol_{\partial M}
\end{equation}
for $1 \leq k \leq n$.
In particular,
\begin{align}
\calL_{n-2}(M) &=
\frac1{2\pi} \int_{\partial M} (\trace S) \: \vol_{\partial M},\qquad\text{ and}\\
\chi(M) &=
b_n \int_{\partial M} (\det S) \: \vol_{\partial M}\quad\text{if $n \geq 1$}.
\end{align}
Note that $\trace S$ is $(n-1)$~times the mean curvature, and $\det S$ ``the'' Lipschitz--Killing, or Gauss--Kronecker, curvature, of~$\partial M$ in~$M$.

\section{Intrinsic volumes of sublevel sets}
\label{sec:sublevel}

\subsection{Sublevel sets and level sets}
\label{subsec:sublevel}

If $f \colon M \to \R$ is a function on a set and $a \in \R$, then the \define{$a$-sublevel set} of $f$ is defined by
\begin{equation}
M^a_f \coloneqq f^{-1} \big( \intervOC{-\infty}{a} \big)
\end{equation}
also written $M^a$ if there is no risk of confusion, and the \define{$a$-level set} of $f$ is $f^{-1}(a)$.

Let $M$ be a manifold, let $p \in \overline{\N_{\geq 1}}$, and let $f \in \calC^p(M, \R)$.
The real number $a \in \R$ is a \define{regular value} of $f$ if $f(x) = a$ implies $\extdiff f(x) \neq 0$ for all $x \in M$.
We define the sets
\begin{align}
\label{def:reg}
\Reg^p(M, \R) &\coloneqq \{ (f, a) \in \calC^p(M, \R) \times \R \mid a \text{ is a regular value of $f$} \},\\
\label{def:regc}
\Reg_c^p(M, \R) &\coloneqq \{ (f, a) \in \Reg^p(M, \R) \mid M_f^a \text{ is compact}\}.
\end{align}
We also set $\calC^p_{a-\reg}(M, \R) \coloneqq \pr_1 \big( \Reg^p(M, \R) \cap (\calC^p(M, \R) \times \{a\}) \big)$ and similarly for $\calC^p_{a-\reg, c}(M, \R)$.

\begin{proposition}
\label{prop:levsublev}
Let $M$ be a manifold, let $p \in \overline{\N_{\geq 1}}$, and let $(f, a) \in \Reg^p(M, \R)$.
Then, $M^a_f$ is a full-dimensional $\calC^p$-submanifold with boundary of~$M$.
Its manifold boundary, equal to its topological boundary, is the $\calC^p$-hypersurface $\partial M^a_f = f^{-1}(a)$.
\end{proposition}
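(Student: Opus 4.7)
The plan is to apply the submersion form of the implicit function theorem at each point of the level set $f^{-1}(a)$ to produce $\calC^p$-charts on $M$ in which $f$ coincides with a single coordinate, and then to read off the three assertions directly from these charts.

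First, I would set up the adapted charts. Fix $x_0 \in f^{-1}(a)$. Since $\extdiff f(x_0) \neq 0$ by regularity of $a$, the local submersion theorem for $\calC^p$-maps (which holds for all $p \in \overline{\N_{\geq 1}}$) produces a $\calC^p$-chart $(U, \varphi = (y^1, \ldots, y^n))$ of $M$ centred at $x_0$ in which $f|_U = y^n + a$. In such a chart, $f^{-1}(a) \cap U = \{y^n = 0\}$, which exhibits $f^{-1}(a)$ as a $\calC^p$-hypersurface of~$M$.

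Next, I would assemble the submanifold-with-boundary structure on $M^a_f$. On the open set $f^{-1}(\intervOO{-\infty}{a})$, the ambient charts of $M$ serve as interior charts for $M^a_f$. Near each $x_0 \in f^{-1}(a)$, the chart above identifies $M^a_f \cap U$ with $\{y^n \leq 0\}$, a half-space; composing $\varphi$ with the reflection $(y^1, \ldots, y^n) \mapsto (y^1, \ldots, y^{n-1}, -y^n)$ produces a $\calC^p$-boundary chart for $M^a_f$. The union of these interior and boundary charts equips $M^a_f$ with the structure of a full-dimensional $\calC^p$-submanifold with boundary of $M$ whose manifold boundary is $f^{-1}(a)$.

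Finally, I would identify the manifold boundary with the topological boundary. The complement $M \setminus M^a_f = f^{-1}(\intervOO{a}{+\infty})$ is open by continuity of~$f$, and so is $f^{-1}(\intervOO{-\infty}{a}) \subseteq M^a_f$; hence every point with $f(x) \neq a$ lies in the interior of $M^a_f$ or of its complement and is not in the topological boundary. Conversely, for $x_0 \in f^{-1}(a)$, every neighborhood of $x_0$ meets both $\{y^n > 0\}$ (disjoint from $M^a_f$) and $\{y^n < 0\}$ (contained in $M^a_f$), so $x_0$ lies in the topological boundary. Thus $\partial M^a_f = f^{-1}(a)$ as claimed. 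The proof is essentially bookkeeping once the submersion chart is in hand; I expect no serious obstacle, the only substantive ingredient being the standard $\calC^p$ version of the local submersion theorem, which applies since $p \geq 1$.
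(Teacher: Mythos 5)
Your proof is correct and follows the same route as the paper's (much terser) argument: the $\calC^p$ submersion theorem gives adapted charts in which $f$ is a coordinate, and the two cases $f(x)<a$ and $f(x)=a$ yield interior and boundary charts respectively. You have simply spelled out the bookkeeping that the paper leaves to the reader, including the identification of the manifold boundary with the topological boundary, and all the details check out.
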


\begin{proof}
By the submersion theorem, $f^{-1}(a)$ is a $\calC^p$-hypersurface of~$M$.
By considering separately points $x \in M$ such that $f(x) < a$ and such that $f(x) = a$, one checks that $M^a_f$ is a full-dimensional $\calC^p$-submanifold with boundary of~$M$, and that $\partial M^a_f = f^{-1}(a)$.
\end{proof}

\subsection{Intrinsic volumes of sublevel sets}
\label{subsec:volume-sublevel}

Let $(M, g)$ be an $n$-dimensional Riemannian manifold (not necessarily compact, but without boundary).
Let $f \in \calC^2(M, \R)$.
Its \define{gradient} is defined by $\grad f \coloneqq (\extdiff f)^\sharp$.
Its \define{Hessian} is defined by $\Hess(f) \coloneqq \levici \extdiff f = (\levici \grad f)^\flat$.
Its \define{Laplacian} is the trace of its Hessian, $\lapla f \coloneqq \trace (\Hess(f))$.

Let $(f, a) \in \Reg^2(M, \R)$.
By Proposition~\ref{prop:levsublev}, the set $M^a$ is a full-dimensional $\calC^2$-submanifold with boundary of~$M$ and its boundary is the $\calC^2$-hypersurface $\partial M^a$.
The outward unit normal vectorfield of $\partial M^a$ is $\nu = \frac{\grad f}{\norm{\grad f}}$.
Therefore, one has $\levici \nu = \norm{\grad f}^{-1} \levici \grad f + \extdiff \norm{\grad f}^{-1} \odot \grad f$, which has tangential component $\norm{\grad f}^{-1} \levici \grad f$.
Therefore, the second fundamental form of~$\partial M^a$ in~$M$ is given by
\begin{equation}
\label{eq:hessian}
S = \frac{\Hess(f)|_{\grad f^\perp}}{\norm{\grad f}}.
\end{equation}

We briefly explain the idea underlying the rest of this subsection.
By Formula~\eqref{eq:hessian}, the integrals in the sum on~$m$ in Formula~\eqref{eq:volumes} are equal to
\begin{equation}
\int_{\partial M^a} \norm{\grad f}^{2m+1-k} \trace \left( \extpow{m} R|_{\grad f^\perp} \wedge \extpow{k-1-2m} \Hess(f)|_{\grad f^\perp} \right) \vol_{\partial M^a}.
\end{equation}
We will convert these integrals on $\partial M^a$ into integrals on $M^a$ by using the divergence theorem.
To do this, we need to find a vectorfield $X \in \frakX(M^a)$ such that $X|_{\partial M^a} = \nu = \frac{\grad f}{\norm{\grad f}}$ and $\norm{\grad f}^{2m+1-k} \trace \left( \extpow{m} R|_{\grad f^\perp} \wedge \extpow{k-1-2m} \Hess(f)|_{\grad f^\perp} \right) X$ has a divergence which is integrable on~$M^a$.
Besides the boundary, the possibly problematic points are the points where $\grad f = 0$, first because of the factor $\norm{\grad f}^{2m+1-k}$, and also because of the restriction to $\grad f^\perp$.
Since the two regions of interest are at $f = a$ and at $\grad f = 0$, it makes sense to look for a vectorfield of the form $X = F \circ \frac{\grad f}{a-f}$ where $F \in \calC^1(TM, TM)$ is such that $F(u) \sim_\infty \frac{u}{\norm{u}}$ and $F$ vanishes sufficiently fast at~0 for the divergence to be integrable.

We now make this idea precise.

\begin{lemma}
\label{lem:poly}
For all $n, k, m \in \N$ such that $1 \leq k \leq n$ and $0 \leq m \leq \floor{\frac{k-1}2}$, there exists a homogeneous polynomial $P_{n, k, m}$ with integer coefficients such that for any $n$-dimensional Euclidean space with orthonormal basis $(\V, \frakB)$, any symmetric bilinear forms $R \in \sympow{2}\extpow{2}\V^*$ and $H \in \sympow{2}\V^*$, and any $v \in \V \setminus \{0\}$, one has
\begin{equation}
\label{eq:lem}
\trace \left( \extpow{m} R|_{v^\perp} \wedge \extpow{k-1-2m} H|_{v^\perp} \right) = \frac{P_{n, k, m}\big((r_{ijkl}), (h_{ij}), (v_i)\big)}{\norm{v}^{2(k-1)}}
\end{equation}
where $(r_{ijkl})$ (resp.\ $(h_{ij})$ and $(v_i)$) are the coefficients of~$R$ (resp.\ $H$ and $v$) in~$\frakB$.
\end{lemma}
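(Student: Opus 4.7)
The plan is to convert the trace on $v^\perp$ into a coordinate computation on $\V$ by means of the orthogonal projection $\pi \coloneqq \id_\V - \norm{v}^{-2}\, v \otimes v^\flat$ onto $v^\perp$. I would set $T \coloneqq \extpow{m} R \wedge \extpow{k-1-2m} H \in \sympow{2}\extpow{k-1}\V^*$ and identify $T$ via the metric with a self-adjoint endomorphism of $\extpow{k-1}\V$. Since $\pi^2 = \pi = \pi^*$, the operator $\extpow{k-1}\pi$ is the orthogonal projection onto $\extpow{k-1}(v^\perp)$, so by cyclicity of the trace,
\[
\trace(T|_{v^\perp}) = \trace_\V\!\bigl(T \circ \extpow{k-1}\pi\bigr).
\]

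Next I would expand in the basis $\frakB$. The projection reads $\pi_{ij} = \delta_{ij} - \norm{v}^{-2}\, v_i v_j$, and the matrix entries of $\extpow{k-1}\pi$ between ordered multi-indices of size $k-1$ are the $(k-1)\times(k-1)$ minors $\det(\pi_{\alpha_a \beta_b})_{a,b}$; hence
\[
\trace(T|_{v^\perp}) = \sum_{\alpha,\beta} T_{\alpha;\beta}\; \det\!\bigl(\pi_{\alpha_a \beta_b}\bigr)_{a,b},
\]
the sum running over ordered $(k-1)$-multi-indices. By the explicit formula for the Kulkarni--Nomizu product (cf.~\cite[§2]{federer}), each component $T_{\alpha;\beta}$ is a polynomial in the $r_{ijkl}$'s and $h_{ij}$'s with integer coefficients, of degree $m$ in the $r$'s and degree $k-1-2m$ in the $h$'s. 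Developing each minor multilinearly in the two terms of $\pi$ produces a sum of monomials in the $\delta_{ij}$'s and the $v_i v_j/\norm{v}^2$'s, again with integer coefficients, and with at most $k-1$ factors of $\norm{v}^{-2}$ per monomial. Clearing to the common denominator $\norm{v}^{2(k-1)}$ then yields a numerator $P_{n,k,m}$ that is a polynomial in $r_{ijkl}$, $h_{ij}$, $v_i$ with integer coefficients. Homogeneity is a degree count: the left-hand side is invariant under $v \mapsto \lambda v$ (since $v^\perp$ is unchanged) and has bi-degree $(m,\, k-1-2m)$ in $(R, H)$, so $P_{n,k,m}$ is homogeneous of tridegree $(m,\, k-1-2m,\, 2(k-1))$ in $(R, H, v)$.

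The main delicate point is the control of the exponent of $\norm{v}^{-2}$: a naive pre- and post-composition $\extpow{k-1}\pi \circ T \circ \extpow{k-1}\pi$ would appear to introduce $\norm{v}^{-4(k-1)}$, but the idempotency $\pi^2 = \pi$ together with cyclicity of the trace collapses the two projections into a single one, which is exactly what makes the exponent $k-1$ tight. Everything else is the combinatorial bookkeeping of the Kulkarni--Nomizu product and of the minor expansion, which preserves integrality of the coefficients throughout.
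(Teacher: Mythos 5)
Your proof is correct, and it takes a genuinely different route from the paper's. The paper constructs an explicit orthogonal change-of-basis matrix adapted to $v$ (a Gram--Schmidt-type construction, carried out first under the assumption $v_1>0$), reads off the matrices of $H|_{v^\perp}$ and $R|_{v^\perp}$ in the resulting orthonormal basis of $v^\perp$, and then needs a somewhat delicate argument --- invariance under orthogonal transformations combined with a differentiability comparison of the partial norms $a_i=\sqrt{v_1^2+\dots+v_i^2}$ at $v=\rme_1$ versus $v=\rme_n$ --- to conclude that the $a_i$ with $i<n$ occur only to even powers, so that the numerator is a genuine polynomial and only $\norm{v}^{2(k-1)}$ survives in the denominator. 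Your route through the orthogonal projection $\pi=\id_\V-\norm{v}^{-2}\,v\otimes v^\flat$ and the identity $\trace(T|_{v^\perp})=\trace_\V\bigl(T\circ\extpow{k-1}\pi\bigr)$ bypasses all of this: polynomiality of the numerator, the exact exponent $2(k-1)$, and the homogeneity all drop out of the multilinear expansion of the minors of $\pi$ (each cleared factor $\norm{v}^2\delta_{ij}-v_iv_j$ is homogeneous of degree $2$ in $v$, so the numerator is visibly homogeneous of degree $2(k-1)$, making even your scaling argument dispensable). The one point left at the same level of informality as in the paper is the integrality of the coefficients, which depends on the normalization of the exterior (Kulkarni--Nomizu) product; with Federer's convention the components $T_{\alpha;\beta}$ are indeed integer polynomials in the $r_{ijkl}$'s and $h_{ij}$'s, as you assert. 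What the paper's computation buys in exchange is the explicit expression for $(P^{-1}HP)_{ij}$ that it reuses to compute $P_{n,2}$ and the other examples, but your identity $P_{n,k,m}=\sum_{\alpha,\beta}T_{\alpha;\beta}\,\norm{v}^{2(k-1)}\det(\pi_{\alpha_a\beta_b})$ is an equally workable closed form, so nothing essential is lost.
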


For the sake of definiteness, if $\frakB = (e_i)_{1 \leq i \leq n}$ is an orthonormal basis of $\V$, we consider the basis $(e_i \wedge e_j)_{1 \leq i < j \leq n}$ of $\extpow{2}\V$.
The coefficients of $R$ can be written $(r_{ijkl})_{1 \leq i, j, k, l \leq n}$ with $i < j$ and $k < l$ and $(i, j) \leq (k, l)$ in the lexicographic order.

\begin{proof}
Let $n, k, m, (\V, \frakB), R, H, v$ be as in the statement.
Without loss of generality, we can suppose that $(\V, \frakB) = (\R^n, \std)$ with the standard inner product.
Set $a_i \coloneqq \sqrt{\sum_{j=1}^i v_i^2}$ for $1 \leq i \leq n$.
In particular, $a_1 = v_1$ and $a_n = \norm{v}$.
We first assume that $v_1 > 0$.
Let $P$ be the following change of basis matrix:
$P_{i1} \coloneqq v_i/\norm{v}$, and if $j \geq 2$, then $P_{ij} \coloneqq \beta_{ij}/(a_{j-1} a_j)$ with $\beta_{ij} \coloneqq -v_i v_j$ if $i < j$, or $a_{j-1}^2$ if $i = j$, or 0 if $j < i$.
It is an orthogonal matrix and $P^{-1}HP$ restricted to the rows and columns $2 \leq i, j \leq n$ is the matrix of $H|_{v^\perp}$ in an orthonormal basis.

Since the $\beta_{ij}$'s are polynomials in the $v_k$'s and $a_k$'s, the coefficients $(P^{-1}HP)_{ij}$ with $2 \leq i, j \leq n$ are of the form
\begin{equation*}
(P^{-1}HP)_{ij} = \frac{\big(\text{ polynomial } \big)}{a_{i-1} a_i a_{j-1} a_j}.
\end{equation*}

The change of basis matrix in $\extpow{2} \V$ associated with $P$, say $Q$, has coefficients $Q_{ijkl} = P_{ik}P_{jl} - P_{il}P_{jk}$.
As with~$P$, the matrix $Q^{-1}RQ$ restricted to the rows and columns $2 \leq i, j, k, l \leq n$ (with $i < j$ and $k < l$) is the matrix of $R|_{v^\perp}$ in an orthonormal basis.
The coefficients $(Q^{-1}RQ)_{ijkl}$ with $2 \leq i, j, k, l \leq n$ (with $i < j$ and $k < l$) are of the form
\begin{equation*}
(Q^{-1}RQ)_{ijkl} = \frac{\big(\text{ polynomial } \big)}{a_{i-1} a_i a_{j-1} a_j a_{k-1} a_k a_{l-1} a_l}.
\end{equation*}

The coefficients of the exterior product of their exterior powers is again of a similar form, hence so is its trace.
More precisely, it is a rational fraction with variables $r_{ijkl}$, $h_{ij}$, $v_i$, $a_i$.
The denominator is a product of $a_i$'s, where the exponent of $a_n$ is at most $2(2m + (k-1-2m)) = 2(k-1)$.

This expression was obtained under the assumption that $v_1 > 0$, but it is intrinsic to $(R, S, v)$ and invariant under orthogonal transformations of $\V$.
Therefore, it also holds if $v = \rme_n$, in which case all the $a_i$'s with $i < n$ vanish.
As a consequence, the only $a_i$'s at the denominator are those with $i = n$, that is, $a_n = \norm{v}$.

The variable $a_n$ does not appear in the numerator (since $\beta_{ij}$ only involves $a_{j-1}$).
For $i < n$, then $a_i$ as a function of the $v_k$'s is not differentiable at $\rme_n$ but is differentiable at~$\rme_1$, so by invariance under orthogonal transformation, the variables $a_i$ with $i < n$ can only appear in the numerator with even exponents.
Therefore, the numerator is a polynomial in the coefficients $r_{ijkl}$, $h_{ij}$, $v_i$.
\end{proof}

\begin{definition}
For all $n, k, m \in \N$ such that $1 \leq k \leq n$ and $0 \leq m \leq \floor{\frac{k-1}2}$, we define $P_{n, k, m}$ to be the (unique) polynomial whose existence is asserted in Lemma~\ref{lem:poly}.
For other values of the indices, we set $P_{n, k, m} \coloneqq 0$.
We set $P_{n, k} \coloneqq P_{n, k, 0}$.
\end{definition}

\begin{remark}
\label{rmk:degree}
One has $\dim \sympow{2}\V^* = \frac{n(n+1)}2$ and $\dim \sympow{2}\extpow{2}\V^* = \frac{n(n-1)(n(n-1)+2)}8$.
Therefore, $P_{n, k, m}$ has $n + [k-1-2m \geq 1] \frac{n(n+1)}2 + [m \geq 1] \frac{n(n-1)(n(n-1)+2)}8$ variables.
By homogeneity considerations, $P_{n, k, m}$ has degree $2(k-1)$ in the coefficients of $v$, degree $k-1-2m$ in the coefficients of $H$, and degree $m$ in the coefficients of $R$.
\end{remark}

The proof shows how to compute the $P_{n, k, m}$'s.
For instance, one has
\begin{multline*}
(P^{-1}HP)_{ij} = \frac1{a_{i-1} a_i a_{j-1} a_j} \times {}\\
\left( a_{i-1}^2 h_{ij} a_{j-1}^2 + v_i v_j \sum_{k=1}^{i-1} \sum_{l=1}^{j-1} v_k h_{kl} v_l - a_{i-1}^2 v_j \sum_{l=1}^{j-1} h_{il} v_l - a_{j-1}^2 v_i \sum_{k=1}^{i-1} v_k h_{kj} \right)
\end{multline*}
for $2 \leq i, j \leq n$, and the trace of an exterior power can be computed as a sum of minors of given order.
We consider a few special cases:
\begin{itemize}
\item
If $k = 1$ (hence $m = 0$), then the left-hand side of~\eqref{eq:lem} is the trace of the identity on $\extpow{0} v^\perp \simeq \R$, so $P_{n, 1} = 1$.
\item
For the case $k = 2$ (hence $m = 0$), note that $\trace(H|_{v^\perp}) = \trace H - \norm{v}^{-2} \: H(v, v)$.
This gives
\begin{equation}
P_{n, 2} = \sum_{i=1}^n \left( \sum_{j \neq i} v_j^2 \right) h_{ii} - 2 \sum_{i<j} v_i v_j h_{ij}.
\end{equation}
The first such polynomial is $P_{2, 2} = v_2^2 h_{11} - 2 v_1 v_2 h_{12} + v_1^2 h_{22}$.
This is simply $H(u, u)$ where $u$ is any of the two unit vectors orthogonal to $v$.
\item
For the case $(k, m) = (n, 0)$, one has $\det H = \det(H|_{v^\perp}) \: \norm{v}^{-2} \: H(v, v^H)$ where $v^H$ is the projection of $v$ onto $(v^\perp)^{\perp_H}$ parallel to $v^\perp$, provided that $H|_{v^\perp}$ is nondegenerate.
Indeed, considering an orthonormal basis $(\rmf_i)_{1 \leq i \leq n}$ of $\V$ with $\rmf_1 = \frac{v}{\norm{v}}$ and $(\rmf_i)_{2 \leq i \leq n}$ a diagonalizing basis for $H|_{v^\perp}$, one has
\begin{equation*}
\det H = \det
\begin{pmatrix}
H(\frac{v}{\norm{v}}, \frac{v}{\norm{v}}) & \dots & H(\frac{v}{\norm{v}}, \rmf_i) & \dots\\
\vdots & \ddots & 0 & 0\\
H(\frac{v}{\norm{v}}, \rmf_i) & 0 & \lambda_i & 0\\
\vdots & 0 & 0 & \ddots
\end{pmatrix}.
\end{equation*}
A double expansion of this determinant yields
\begin{equation*}
\det H = (\det H|_{v^\perp}) \: \norm{v}^{-2} \: H\left(v, v - \sum_{i=2}^n H(v, \frac{\rmf_i}{\lambda_i}) \rmf_i\right)
\end{equation*}
as claimed.
\end{itemize}

Applying Lemma~\ref{lem:poly} to $(R, S, v) = (R, \Hess(f), \grad f)$, one obtains
\begin{equation}
\label{eq:integrand}
\trace \left( \extpow{m} R|_{\grad f^\perp} \wedge \extpow{k-1-2m} \Hess(f)|_{\grad f^\perp} \right) =
\frac{P_{n, k, m}(R, \Hess(f), \grad f)}{\norm{\grad f}^{2(k-1)}}.
\end{equation}
In view of the special cases considered above, and under the nondegeneracy condition for the third equation, one has
\begin{align}
P_{n, 1}(\Hess(f), \grad f) &= 1,\\
P_{n, 2}(\Hess(f), \grad f) &= \norm{\grad f}^2 \lapla f - \Hess(f)(\grad f, \grad f),\\
P_{n, n}(\Hess(f), \grad f) &= \frac{\det(\Hess(f)) \: \norm{\grad f}^{2n}}{\Hess(f)\left(\grad f, (\grad f)^{\Hess(f)}\right)}.
\end{align}

We now state the version of the divergence theorem that will be useful to us.
The \define{divergence} of a $\calC^1$-vectorfield $X \in \frakX(M)$ is defined by $\Lie_X \vol_M = (\div X) \: \vol_M$, where $\Lie$ denotes the Lie derivative.
Many generalizations of the standard divergence theorem have been proved, relaxing hypotheses on the regularity and compactness of the manifold or stratified space and on the regularity of the vectorfield, encompassing the present statement.
We include a proof for the convenience of the reader.

\begin{theorem}[Divergence theorem]
\label{thm:div}
Let $(M, g)$ be a compact $n$-dimensional Riemannian manifold with boundary.
Let $X \in \frakX(M)$ be a continuous vectorfield on $M$ which is of class $\calC^1$ on $\interior{M}$ and such that $\div X \in L^1(M)$.
Then,
\begin{equation}
\int_M (\div X) \vol_M = \int_{\partial M} \innerp*{X}{v} \vol_{\partial M}.
\end{equation}
\end{theorem}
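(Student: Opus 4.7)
The plan is a collar-exhaustion argument that reduces the claim to the classical divergence theorem for $\calC^1$ vectorfields on compact smooth manifolds with boundary.

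First, by the collar neighborhood theorem, fix a smooth diffeomorphism $\Phi \colon \partial M \times \intervCO{0}{\epsilon_0} \to U$ onto an open neighborhood of $\partial M$, with $\Phi(\cdot,0) = \id_{\partial M}$ and $\partial_t \Phi(\cdot,0)$ equal to the inward unit normal. For $\epsilon \in \intervOO{0}{\epsilon_0}$, set $M_\epsilon \coloneqq M \setminus \Phi(\partial M \times \intervCO{0}{\epsilon})$: this is a compact smooth submanifold with boundary $\partial M_\epsilon = \Phi(\partial M \times \{\epsilon\}) \subset \interior M$, and $X$ is $\calC^1$ on a neighborhood of $M_\epsilon$. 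The classical divergence theorem applied to $X|_{M_\epsilon}$ gives
\[ \int_{M_\epsilon} (\div X) \vol_M = \int_{\partial M_\epsilon} \innerp{X}{\nu_\epsilon} \vol_{\partial M_\epsilon}, \]
where $\nu_\epsilon$ is the outward unit normal of $\partial M_\epsilon$ in $M_\epsilon$.

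Next I would pass to the limit as $\epsilon \to 0^+$. On the left-hand side, the family $(M_\epsilon)$ is increasing with union $\interior M$, and $\partial M$ has vanishing $\vol_M$-measure, so $\mathbf{1}_{M_\epsilon} \div X \to \div X$ almost everywhere. Since $\lvert \mathbf{1}_{M_\epsilon} \div X \rvert \leq \lvert \div X \rvert \in L^1(M)$, dominated convergence yields $\int_{M_\epsilon} (\div X) \vol_M \to \int_M (\div X) \vol_M$. For the right-hand side, parameterize $\partial M_\epsilon$ by $\partial M$ through $\phi_\epsilon \coloneqq \Phi(\cdot, \epsilon)$, and write $\phi_\epsilon^* \vol_{\partial M_\epsilon} = J_\epsilon \vol_{\partial M}$ for a smooth positive $J_\epsilon$. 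As $\epsilon \to 0$, the map $\phi_\epsilon$ tends to $\id_{\partial M}$ in the $\calC^1$-topology, so $J_\epsilon \to 1$ and the pulled-back outward normals $\phi_\epsilon^* \nu_\epsilon$ tend to $\nu$, both uniformly on $\partial M$. Because $X$ is continuous on the compact manifold $M$ it is uniformly continuous, hence $X \circ \phi_\epsilon \to X|_{\partial M}$ uniformly. The boundary integrand $\innerp{X \circ \phi_\epsilon}{\phi_\epsilon^* \nu_\epsilon} J_\epsilon$ therefore converges uniformly on $\partial M$ to $\innerp{X}{\nu}$, so $\int_{\partial M_\epsilon} \innerp{X}{\nu_\epsilon} \vol_{\partial M_\epsilon} \to \int_{\partial M} \innerp{X}{\nu} \vol_{\partial M}$, and combining both limits finishes the proof.

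The (modest) obstacle is this boundary passage: because $X$ is only assumed continuous up to $\partial M$ rather than $\calC^1$, one cannot directly invoke the classical theorem on all of $M$. The argument works because uniform continuity of $X$ on the compact set $M$ replaces the $\calC^1$-regularity that is really needed only in the interior, while the $L^1$-hypothesis on $\div X$ tames the interior integral via dominated convergence.
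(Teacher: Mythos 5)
Your proposal is correct and is essentially the same argument as the paper's: the paper also exhausts $M$ by submanifolds $M_\epsilon$ obtained by pushing $\partial M$ inward (using the normal geodesic flow rather than an abstract collar), applies the classical divergence theorem on $M_\epsilon$, and passes to the limit via dominated convergence on the interior integral and uniform convergence of the pulled-back boundary integrand.
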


\begin{proof}
If $X$ is of class $\calC^1$ on $M$, then this is the standard divergence theorem.
Else, we consider the geodesic flow from the boundary of $M$ along the outward unit normal vectorfield~$\nu$.
For $\epsilon > 0$ small enough, set $\theta_\epsilon \colon \partial M \to M, x \mapsto \exp(x,-\epsilon \nu_x)$ and set $M_\epsilon \coloneqq M \setminus \bigcup_{s \in \intervCO{0}{\epsilon}} \theta_s(\partial M)$.
For $\epsilon$ small enough, $M_\epsilon$ is a compact submanifold with boundary of $M$, and $\theta_\epsilon$ induces a diffeomorphism $\vartheta_\epsilon \colon \partial M \xrightarrow{\sim} \partial M_\epsilon$.
Applying the standard divergence theorem on $M_\epsilon$, one obtains
$\int_{M_\epsilon} (\div X) \: \vol_M = \int_{\partial M_\epsilon} \innerp*{X}{v} \: \vol_{\partial M_\epsilon}$.
When $\epsilon \to 0$, the left-hand side converges to $\int_{M_\epsilon} (\div X) \: \vol_M$ by Lebesgue's dominated convergence theorem, since $\div X \in L^1(M)$.
The right-hand side is equal, by change of variable, to $\int_{\partial M} \innerp*{\vartheta_\epsilon^* X}{v} (\det T\vartheta_\epsilon) \: \vol_{\partial M}$, which converges to $\int_{\partial M} \innerp*{X}{v} \: \vol_{\partial M}$ since the integrand is uniformly convergent and $\partial M$ is compact.
\end{proof}

We can now prove a first general result.

\begin{theorem}
\label{thm:main}
Let $(M, g)$ be an $n$-dimensional Riemannian manifold.
Let $(f, a) \in \Reg_c^3(M, \R)$.
For $1 \leq k \leq n$ and $0 \leq m \leq \floor{\frac{k-1}2}$, let $F_{k, m} \in \calC^1(TM, TM)$ be such that $F_{k, m}(u) \sim_\infty \frac{u}{\norm{u}}$.
Then, for $0 \leq k \leq n$, one has
\begin{multline}
\calL_{n-k}(M^a) = a_k
\int_{M^a} \trace\left( \extpow{k/2} R \right) \vol_M + {}\\
\sum_{m=0}^{\floor*{\frac{k-1}{2}}} b_{k, m} 
\int_{M^a} \div \left( \norm{\grad f}^{2m+3(1-k)}
P_{n, k, m}(R, \Hess(f), \grad f)
\left( F_{k, m} \circ \frac{\grad f}{a-f} \right) \right) \vol_M
\end{multline}
under the condition that the divergence appearing in the integral exists and is integrable.

If $M$ is flat, then for $1 \leq k \leq n$, let $F_k \in \calC^1(TM, TM)$ be such that $F_k(u) \sim_\infty \frac{u}{\norm{u}}$.
Then,
\begin{equation}
\calL_{n-k}(M^a) =
b_k \int_{M^a} \div \left( \norm{\grad f}^{3(1-k)} P_{n, k}(\Hess(f), \grad f) \left( F_k \circ \frac{\grad f}{a-f} \right) \right) \vol_M.
\end{equation}
under the same conditions.
\end{theorem}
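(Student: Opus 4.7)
The plan is to apply Formula~\eqref{eq:volumes} to the compact $\calC^3$-manifold with boundary $M^a$ (equipped with the restriction of the ambient Riemannian metric and curvature tensor), leaving the $a_k$-summand unchanged and rewriting each $m$-th boundary summand as an integral over $M^a$ via the divergence theorem. First, on $\partial M^a$ one has $R|_{\partial M^a} = R|_{\grad f^\perp}$ and, by Equation~\eqref{eq:hessian}, $S = \Hess(f)|_{\grad f^\perp}/\norm{\grad f}$; pulling the scalar factor out of the $p$-homogeneous exterior power gives
\[
\trace\left(\extpow{m} R|_{\partial M^a} \wedge \extpow{k-1-2m} S\right) = \frac{1}{\norm{\grad f}^{k-1-2m}} \trace\left(\extpow{m} R|_{\grad f^\perp} \wedge \extpow{k-1-2m} \Hess(f)|_{\grad f^\perp}\right),
\]
which combined with~\eqref{eq:integrand} collapses the $m$-th boundary integrand to the scalar $\norm{\grad f}^{2m+3(1-k)} P_{n,k,m}(R, \Hess(f), \grad f)$.

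Next, for each admissible $m$, I set
\[
X_{k,m} \coloneqq \norm{\grad f}^{2m+3(1-k)}\, P_{n,k,m}(R, \Hess(f), \grad f)\, \left(F_{k,m} \circ \frac{\grad f}{a-f}\right)
\]
on $\interior{M^a}$, and verify that $X_{k,m}$ extends continuously to $\partial M^a$ with $\innerp{X_{k,m}}{\nu}$ equal to exactly that scalar integrand. Since $a$ is a regular value, $\norm{\grad f}$ is bounded away from $0$ on a neighborhood of $\partial M^a$, so the scalar prefactor is already continuous there. Since $f < a$ on $\interior{M^a}$, as $x \to \partial M^a$ the vector $\grad f(x)/(a-f(x))$ has norm tending to $+\infty$, so the asymptotic $F_{k,m}(u) \sim_\infty u/\norm{u}$ yields $F_{k,m}\circ(\grad f/(a-f)) \to \grad f/\norm{\grad f} = \nu$ on the boundary. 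Hence $\innerp{X_{k,m}}{\nu}|_{\partial M^a} = \norm{\grad f}^{2m+3(1-k)} P_{n,k,m}(R, \Hess(f), \grad f)$, as required.

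Then Theorem~\ref{thm:div} applied to each $X_{k,m}$ on $M^a$---legitimate under the standing hypothesis $\div X_{k,m} \in L^1(M^a)$---converts the $m$-th boundary summand of~\eqref{eq:volumes} into $b_{k,m} \int_{M^a} \div(X_{k,m})\, \vol_M$. Summing over $m$ and adjoining the untouched $a_k$-term yields the first formula. The flat case follows by setting $R \equiv 0$: then $\trace(\extpow{k/2} R)$ vanishes for $k \geq 1$ and only the $m=0$ summand of the boundary sum survives, so with $F_k \coloneqq F_{k,0}$ and $P_{n,k} = P_{n,k,0}$ the general formula reduces to the second.

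The delicate step I foresee is ensuring that the $X_{k,m}$ meet the hypotheses of Theorem~\ref{thm:div} near interior critical points of $f$. At such a point $\grad f = 0$ and, although $P_{n,k,m}$ vanishes of order $2(k-1)$ in $\grad f$ by Remark~\ref{rmk:degree}, the overall scalar prefactor can still blow up at the rate $\norm{\grad f}^{2m-k+1}$, which is singular whenever $2m < k-1$. The theorem bypasses this analysis by taking integrability of $\div X_{k,m}$ as a hypothesis; the concrete choice of $F_{k,m}$ making this automatic is reserved for the next theorem. Should a stand-alone justification be desired, one would exhaust $M^a$ by subdomains avoiding $\epsilon$-neighborhoods of $\{\grad f = 0\}$, apply Theorem~\ref{thm:div} on each, and pass to the limit by dominated convergence once the auxiliary boundary contributions are shown to vanish.
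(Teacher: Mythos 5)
Your proposal is correct and follows essentially the same route as the paper: rewrite the boundary integrand of Formula~\eqref{eq:volumes} via Equations~\eqref{eq:hessian} and~\eqref{eq:integrand} to get the exponent $2m+3(1-k)$, observe that the asymptotic condition on $F_{k,m}$ makes the vectorfield extend continuously to $\partial M^a$ with normal component equal to that integrand, and invoke Theorem~\ref{thm:div} under the assumed integrability of the divergence. Your closing remarks on the behaviour near interior critical points and on the flat specialization are consistent with how the paper defers those issues to Theorem~\ref{thm:main2}.
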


\begin{remark}
By ``$F(u) \sim_\infty \frac{u}{\norm{u}}$'', we mean that $\lim_{\norm{u} \to +\infty} d \left( F(u), \frac{u}{\norm{u}} \right) = 0$, where $d$ is the distance on $TM$ induced by the Riemannian metric of $M$ (or any distance, since $M^a$ is compact and $\frac{u}{\norm u}$ has unit norm).
\end{remark}

\begin{proof}
Starting with the definition~\eqref{eq:volumes}, we use the expression of the second fundamental form~\eqref{eq:hessian} and Equation~\eqref{eq:integrand} to obtain
\begin{multline}
\calL_{n-k}(M) \coloneqq
a_k \int_{M} \trace\left( \extpow{k/2} R \right) \vol_M + {}\\
\sum_{m=0}^{\floor*{\frac{k-1}{2}}} b_{k, m} \int_{\partial M} \frac{P_{n, k, m}(R, \Hess(f), \grad f)}{\norm{\grad f}^{3(k-1)-2m}}
\vol_{\partial M}.
\end{multline}
The asymptotic property of $F_{k, m}$ ensures that the vectorfield whose divergence is considered in the statement is continuous on $\partial M^a$ and its value there is
\begin{equation*}
\norm{\grad f}^{2m+3(1-k)} P_{n, k, m}(R, \Hess(f), \grad f) \frac{\grad f}{\norm{\grad f}}.
\end{equation*}
Finally, the hypotheses of the proposition ensure that the divergence theorem applies.
\end{proof}

\begin{remark}
\label{rmk:C2}
Since $P_{n, 1} = 1$, the theorem for $k = 1$ holds for $(f, a) \in \Reg^2_c(M, \R)$.
\end{remark}

Our next step is to find explicit functions $F$ (in particular proving that some exist) making the divergence appearing in the theorem integrable.
We consider radial maps of the form $F_{k, m}(u) = \norm{u}^{3(k-1)-2m} G_{k, m}(\norm{u}) u$ with $G_{k, m} \in \calC^1(\R_{\geq 0}, \R)$.
The condition $G_{k, m}(x) \sim_{+\infty} x^{2(m+1)-3k}$ ensures that $F_{k, m}(u) \sim_\infty \frac{u}{\norm{u}}$.
Examples of functions $G$ satisfying these conditions are given by $G_{k, m}(x) \coloneqq \left( 1 + x^{2(3k-2(m+1))}\right)^{-1/2}$.
We set
\begin{equation}
\label{eq:etaell}
\eta_{f, \ell} \coloneqq \sqrt{f^{2\ell} + \norm{\grad f}^{2\ell}}
\end{equation}
for $\ell \geq 0$.
These choices for $F$ yield the following theorem.

\begin{theorem}
\label{thm:main2}
Let $(M, g)$ be an $n$-dimensional Riemannian manifold.
Let $(f, a) \in \Reg_c^3(M, \R)$.
For $0 \leq k \leq n$, one has
\begin{multline}
\label{eq:main}
\calL_{n-k}(M^a) = a_k
\int_{M^a} \trace\left( \extpow{k/2} R \right) \vol_M + {}\\
\sum_{m=0}^{\floor*{\frac{k-1}{2}}} b_{k, m} 
\int_{M^a} \div \left(
\frac{P_{n, k, m}(R, \Hess(f), \grad f)}{\eta_{f-a, 3k-2(m+1)}} \grad f \right) \vol_M.
\end{multline}

If $M$ is flat and $1 \leq k \leq n$, then
\begin{equation}
\label{eq:main-flat}
\calL_{n-k}(M^a) =
b_k 
\int_{M^a} \div \left( \frac{P_{n, k}(\Hess(f), \grad f)}{\eta_{f-a, 3k-2}} \grad f \right) \vol_M.
\end{equation}
\end{theorem}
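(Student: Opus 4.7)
The plan is to apply Theorem~\ref{thm:main} with the explicit choices of radial maps on $TM$ suggested in the paragraph preceding the statement. Setting $\ell \coloneqq 3k - 2(m+1)$ for brevity, I would take
\begin{equation*}
F_{k,m}(u) \coloneqq \frac{\norm{u}^{\ell - 1}}{\sqrt{1 + \norm{u}^{2\ell}}}\, u.
\end{equation*}
An immediate estimate gives $F_{k,m}(u) \sim u/\norm{u}$ as $\norm{u} \to \infty$. Since $\ell - 1 = 3(k-1) - 2m \geq 0$ whenever $k \geq 1$ and $0 \leq m \leq (k-1)/2$, the map $u \mapsto \norm{u}^{\ell-1} u$ extends $\calC^1$-smoothly to $u = 0$, and hence $F_{k,m} \in \calC^1(TM, TM)$.

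The core of the proof is then a direct algebraic simplification. Writing $v \coloneqq \grad f/(a-f)$, so that $\norm{v} = \norm{\grad f}/\abs{a-f}$, and rearranging the square root by factoring out $\abs{a-f}^{\ell}$, one computes on $\interior M^a$ (where $a-f > 0$) that
\begin{equation*}
F_{k,m}(v) = \frac{\norm{\grad f}^{\ell - 1}}{\eta_{f-a, \ell}}\, \grad f.
\end{equation*}
Since $2m + 3(1-k) = 1 - \ell$, multiplying by the prefactor $\norm{\grad f}^{2m + 3(1-k)} P_{n,k,m}(R, \Hess(f), \grad f)$ appearing in Theorem~\ref{thm:main} reduces the vectorfield under the divergence to $(P_{n,k,m}(R, \Hess(f), \grad f)/\eta_{f-a,\ell})\, \grad f$, which is precisely the one appearing in~\eqref{eq:main}.

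Finally, I would verify the integrability hypothesis of Theorem~\ref{thm:main}. The function $\eta_{f-a,\ell}$ vanishes only where $f = a$ and $\grad f = 0$ simultaneously; the regular value assumption excludes this on $\partial M^a$, and the inequality $f < a$ excludes it on $\interior M^a$, so $\eta_{f-a,\ell} > 0$ on the whole compact set $M^a$. Since $f \in \calC^3$, the polynomial $P_{n,k,m}(R, \Hess(f), \grad f)$ is $\calC^1$ on $M^a$ and $1/\eta_{f-a,\ell}$ is $\calC^2$, so the vectorfield is $\calC^1$ on $M^a$ and its divergence is continuous, hence integrable. Theorem~\ref{thm:main} then gives~\eqref{eq:main}, and the flat case~\eqref{eq:main-flat} follows by specialization to $R = 0$, which kills the bulk integral and retains only the $m = 0$ summand of the sum.

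The main subtle point is the exponent bookkeeping in the second step: the prefactor $\norm{\grad f}^{2m + 3(1-k)}$ is singular at interior critical points of $f$, but this singularity is exactly cancelled by the factor $\norm{\grad f}^{\ell-1}$ emerging from $F_{k,m}(v)$, and the strict positivity of $\eta_{f-a,\ell}$ on $M^a$ then guarantees that the resulting vectorfield is $\calC^1$ and thus amenable to the divergence theorem.
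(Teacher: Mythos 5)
Your proof is correct and takes essentially the same route as the paper: Theorem~\ref{thm:main2} is obtained from Theorem~\ref{thm:main} by choosing $F_{k,m}(u) = \norm{u}^{3(k-1)-2m}G_{k,m}(\norm{u})\,u$ with $G_{k,m}(x) = \bigl(1+x^{2(3k-2(m+1))}\bigr)^{-1/2}$, which is exactly your $F_{k,m}$ since $3(k-1)-2m = \ell-1$. Your explicit check that the vectorfield simplifies to $\frac{P_{n,k,m}(R,\Hess(f),\grad f)}{\eta_{f-a,\ell}}\grad f$, which is $\calC^1$ on all of $M^a$ because $\eta_{f-a,\ell}>0$ there, correctly supplies the integrability verification that the paper leaves implicit.
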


For $k = 1, 2$, this gives
\begin{align}
\label{eq:nodal}
\calL_{n-1}(M^a) &= \frac12 \int_{M^a} \div \left( \frac{\grad f}{\eta_{f-a}} \right) \vol_M, \\
\calL_{n-2}(M^a) &= \begin{multlined}[t]
-  \frac1{2\pi} \int_{M^a} \operatorname{scal} \: \vol_M + {}\\
\frac1{2\pi} \int_{M^a} \div \left( \frac{\norm{\grad f}^2 \lapla f - \Hess(f)(\grad f, \grad f)}{\eta_{f-a, 4}} \grad f \right) \vol_M.
\end{multlined}
\end{align}
Similarly, when $M$ is flat and $\Hess(f)|_{\grad f^\perp}$ is nondegenerate, if $n \geq 1$, one has
\begin{equation}
\chi(M^a) = b_n
\int_{M^a} \div \left(
\frac{\det(\Hess(f)) \: \norm{\grad f}^{2n}}{\Hess(f)\left(\grad f, (\grad f)^{\Hess(f)}\right) \eta_{f-a, 3n-2}} \grad f \right) \vol_M.
\end{equation}

\begin{remark}
There are obviously many natural choices for the functions $F$ and $G$.
For instance, one can take $F_{k, m} \coloneqq F_k$.
With the $F_k$'s given above, the divergence corresponding to the $m\textsuperscript{th}$ summand reads $\div \left( \norm{\grad f}^{2m} \frac{P_{n, k, m}(R, \Hess(f), \grad f)}{\eta_{f-a, 3k-2}} \grad f \right)$.
In the case of nodal volumes, other choices are given in the next subsection.
\end{remark}

\begin{remark}
\label{rmk:corners}
Intrinsic volumes can be defined for Riemannian manifolds with corners, and even Whitney stratified spaces of ``positive reach'' in Riemannian manifolds.
Since the divergence theorem admits generalizations to these settings, it is possible to extend the above results to sublevel sets of functions defined on Riemannian manifolds with boundary or corners, and to Whitney stratified spaces in Riemannian manifolds, under the assumption that the function is transverse to the boundary or the strata respectively.
Boundary terms will appear in the formulas.
We do not carry out this generalization in full and only give a formula for nodal volumes in the next subsection (see Remark~\ref{rmk:corners2}).
\end{remark}

\subsection{Nodal volumes}
\label{subsec:nodal-volumes}

In this subsection, we show how we can compute the intrinsic volumes of the zero sets, or nodal sets, of functions defined on compact Riemannian manifolds.
Let $(M, g)$ be a compact $n$-dimensional Riemannian manifold.
Let $f \in \calC^2_{0-\reg}(M, \R)$ (class $\calC^2$ is sufficient by Remark~\ref{rmk:C2}).
The \define{zero set} of $f$ is $\calZ_f \coloneqq f^{-1}(0) = \partial M^0_f = \partial M^0_{-f}$.
By~\eqref{eq:vol-boundary}, one has $\vol(\calZ_f) = \calL_{n-1}(M^0_f) + \calL_{n-1}(M^0_{-f})$.
Since $M^0_f \cup M^0_{-f} = M$ and $M^0_f \cap M^0_{-f} = \calZ_f$ is negligible in $M$, Formula~\eqref{eq:nodal} gives an integral on~$M$.
Using the general formula of Theorem~\ref{thm:main} yields
\begin{equation}
\label{eq:nodal2}
\vol(\calZ_f) = -\frac12
\int_M \div \left( F_1 \circ \frac{\grad f}{\abs{f}} \right) \vol_M
\end{equation}
(where minus the absolute value appears since $f$ is negative on $M^0_f$ and positive on $M^0_{-f}$).
Of course, this identity could have been obtained directly by applying the divergence theorem to the identity $\vol(\calZ_f) = \int_{\partial M^0_f} \vol_{\partial M^0_f}$.

Recalling the definition of $\eta_{f, \ell}$ by Equation~\eqref{eq:etaell}, we set
\begin{equation}
\label{eq:eta}
\eta_f \coloneqq \eta_{f, 1} = \sqrt{f^2 + \norm{\grad f}^2}.
\end{equation}
We also write $\sigma_f \colon M \to \{-1, 0, 1\}$ for the sign of $f$.

Setting, in Formula~\eqref{eq:nodal2}, $F_1(u) \coloneqq G_1(\norm{u}) u$ with respectively $G_1(x) \coloneqq (1+x^2)^{-1/2}$ and $\frac2\pi \frac{\arctan x}{x}$ and $\frac{\tanh x}{x}$, one obtains
\begin{equation}
\label{eq:algebraic}
\vol(\calZ_f) = \frac12 \int_M
\frac{\sigma_f}{\eta_f^3} \left(
f \norm{\grad f}^2 + \Hess(f)(\grad f, \grad f)
- \eta_f^2 \lapla f \right)
\vol_M
\end{equation}
and
\begin{multline}
\label{eq:arctan}
\vol(\calZ_f) = \frac1\pi \int_M \left( 
\norm{\grad f}^{-1} \left( \arctan \circ \frac{\norm{\grad f}}{f} \right)
\left( \frac{\Hess(f)(\grad f, \grad f)}{\norm{\grad f}^2} - \lapla f \right) + {}
\right.\\\left.
\eta_f^{-2} \left( \norm{\grad f}^2 - \frac{f \Hess(f)(\grad f, \grad f)}{\norm{\grad f}^2} \right)
\right)
\vol_M
\end{multline}
and
\begin{multline}
\label{eq:tanh}
\vol(\calZ_f) = \frac12 \int_M \left(
\norm{\grad f}^{-1} \left( \tanh \circ \frac{\norm{\grad f}}{f} \right)
\left( \frac{\Hess(f)(\grad f, \grad f)}{\norm{\grad f}^2} - \lapla f \right) + {}
\right.\\\left.
\left( \cosh \circ \frac{\norm{\grad f}}{f} \right)^{-2}
\left( \frac{\norm{\grad f}^2}{f^2} - \frac{\Hess(f)(\grad f, \grad f)}{f \norm{\grad f}^2} \right)
\right)
\vol_M
\end{multline}
(see~\cite{jubin} for the computation details).

\begin{remark}
\label{rmk:cont}
In the last three formulas, all terms of the integrands are bounded on $M$ and continuous on $M \setminus \calZ_f$.
Indeed, the Hessian expressions are quadratic in $\norm{\grad f}$, the $\arctan$ and $\tanh$ expressions are linear in $\norm{\grad f}$ when $\norm{\grad f}$ is small, and the $\cosh$ expression is exponentially small in $\abs{f}$ when $\abs{f}$ is small.
However, not all terms need be continuous on~$M$.
This problem is dealt with below.
\end{remark}

\begin{remark}
\label{rmk:corners2}
Fulfilling the promise made in Remark~\ref{rmk:corners}, let $M$ be a compact Riemannian manifold with boundary.
If $f$ intersects $\partial M$ transversely, then Formula~\eqref{eq:nodal2} becomes
\begin{equation}
\label{eq:corner}
\vol(\calZ_f) = \frac12 \left( \int_{\partial M} \innerp*{F \circ \frac{\grad f}{f}}{v} \vol_{\partial M} - \int_M \left( \div \left( F \circ \frac{\grad f}{f} \right) \right) \vol_M \right).
\end{equation}
Note that by the transversality assumption, $\calZ_f \cap \partial M$ is negligible in $\partial M$.
This formula reduces in dimension~1 to~\cite[Prop.~3]{angst}.
\end{remark}

\begin{remark}
The cases considered in~\cite{angst} correspond to $M = (\R/\Z)^n$ with the standard flat metric.
In particular, Formula~\eqref{eq:algebraic} is essentially~\cite[Prop.~5]{angst} (in the case of $(\R/\Z)^n$ with the standard flat metric).
In dimension~1, the general formula~\eqref{eq:nodal2} reduces to~\cite[Prop.~2]{angst}, and in that case, only the condition $\lim_{x \to \pm\infty} F(x) = \pm 1$ is required if one considers the integral as an improper Lebesgue integral.
Similarly, Formula~\eqref{eq:algebraic} reduces to~\cite[Prop.~1]{angst} and Formula~\eqref{eq:arctan} to~\cite[Cor.~1 of Prop.~2]{angst}.
\end{remark}

More generally, the intrinsic volumes of subsets have the additivity property
\begin{equation}
\calL_{n-k}(A) + \calL_{n-k}(B) = \calL_{n-k}(A \cup B) - \calL_{n-k}(A \cap B)
\end{equation}
when $A, B$ are subsets of a compact $n$-dimensional Riemannian manifold~$M$ such that all terms are well-defined (see~\cite[Thm.~5.16(6)]{federer}).
Therefore, if $f \in \calC^3_{0-\reg}(M, \R)$, then
\begin{equation}
\calL_{n-k}(\calZ_f) = \calL_{n-k}(M^0_f) + \calL_{n-k}(M^0_{-f}) - \calL_{n-k}(M).
\end{equation}
The terms corresponding to the first summand in~\eqref{eq:main} cancel out, so that
\begin{equation}
\calL_{n-k}(\calZ_f) = \sum_{m=0}^{\floor*{\frac{k-1}{2}}} b_{k, m} \int_M
\sigma_f^k \div \left( \frac{P_{n, k, m}(R, \Hess(f), \grad f)}{\eta_{f, 3k-2(m+1)}} \grad f \right) \vol_M.
\end{equation}
The exponent $k$ of $\sigma_f$ is congruent modulo 2 to $\deg_{\Hess(f)} P_{n, k, m} + \deg_{\grad f} P_{n, k, m} + 1 = 3k - 2(1+m)$ by Remark~\ref{rmk:degree}.
In particular, $\calL_{n-k}(\calZ_f) = 0$ for $k$ even, as expected.
One can also consider $\calZ_f$ as a Riemmannian manifold with curvature $\tilde{R}$ and obtain
\begin{equation}
\calL_{n-1-k}(\calZ_f) = a_k \int_{\calZ_f} \trace \left( \extpow{k/2} \tilde{R} \right) \vol_{\calZ_f}
\end{equation}
where $\tilde{R}$ is given by the Gauss formula for the curvature of submanifolds, $\tilde{R}(X, Y, Z, T) = R(X, Y, Z, T) + S(X, Z) S(Y, T) - S(X, T) S (Y, Z)$ for $X, Y, Z, T \in \frakX(\calZ_f)$.\\

We return to the question raised in Remark~\ref{rmk:cont} of having continuous integrands.
The only non-continuous terms in the integrands of Equations~\eqref{eq:algebraic}, \eqref{eq:arctan}, \eqref{eq:tanh} are of the form
\begin{equation}
\sigma_f h \left( \Hess(f)(\grad f, \grad f) - \lapla f \: \norm{\grad f}^2 \right)
\end{equation}
with $h \in \calC^1(M, \R)$, respectively $h = \eta_f^{-3}$ and $h = \norm{\grad f}^{-3} \left(\arctan \circ \frac{\norm{\grad f}}{\abs{f}} \right)$ and $h = \norm{\grad f}^{-3} \left(\tanh \circ \frac{\norm{\grad f}}{\abs{f}} \right)$.
This is dealt with in~\cite{angst} (in the case of Equation~\eqref{eq:algebraic} on a flat torus) using an integration by parts.
The same method extends to compact Riemannian manifolds as follows.
One has
\begin{equation*}
\Hess(f)(\grad f, \grad f) - (\lapla f) \norm{\grad f}^2 =
\innerp*{\grad f}{\levici_{\grad f} \grad f - (\lapla f)\grad f}.
\end{equation*}
Therefore,
\begin{equation*}
\sigma_f h \left( \Hess(f)(\grad f, \grad f) - (\lapla f) \norm{\grad f}^2 \right) =
\innerp*{\grad{\abs f}}{h \big( \levici_{\grad f} \grad f - (\lapla f)\grad f \big)}.
\end{equation*}
We temporarily assume that $f$ is of class $\calC^3$ and we use the fact that $\div \left( \abs{f} h (\levici_{\grad f} \grad f - (\lapla f)\grad f) \right)$ has a vanishing integral on $M$ (by the standard divergence theorem).
Therefore,
\begin{multline*}
\int_M \sigma_f h \left( \Hess(f)(\grad f, \grad f) - (\lapla f) \norm{\grad f}^2 \right) \vol_M =\\
\int_M \abs{f} \div \left( h \left( (\lapla f)\grad f - \levici_{\grad f} \grad f \right) \right) \vol_M.
\end{multline*}
One has $\div((\lapla f)\grad f) = (\lapla f)^2 + \innerp{\grad \lapla f}{\grad f}$.
The Bochner formula yields
\begin{align*}
\div \left( \levici_{\grad f} \grad f \right)
&= \div \left ( \frac12 \grad \norm{\grad f}^2 \right) \\
&= \frac12 \lapla \norm{\grad f}^2\\
&= \innerp*{\grad \lapla f}{\grad f} + \norm{\Hess f}^2 + \Ric(\grad f, \grad f)
\end{align*}
where the norm of the Hessian is the Hilbert--Schmidt norm.
Therefore, the third derivatives cancel out.
Since $\calC^2(M, \R)$ is dense in $\calC^3(M, \R)$ for the (Whitney) strong $\calC^2$-topology (see for instance~\cite[Thm.~II.2.6]{hirsch}, and the next section for function space topologies) and the involved quantities are continuous in this topology, one has, for any $f$ of class $\calC^2$,
\begin{multline}
\int_M \sigma_f h \left( \Hess(f)(\grad f, \grad f) - (\lapla f) \norm{\grad f}^2 \right) \vol_M =\\
\int_M \abs{f}
\left(
h \left(
(\lapla f)^2 - \norm{\Hess f}^2 - \Ric(\grad f, \grad f)
\right) +
\right.\\\left.
\innerp*{\grad h}{(\lapla f)\grad f - \levici_{\grad f} \grad f} \vphantom{(\lapla f)^2} \right) \vol_M.
\end{multline}

For example, one has $\grad{\eta_f^{-3}} = -3 \eta_f^{-5} \left( f \grad f + \levici_{\grad f} \grad f\right)$, so Formula~\eqref{eq:algebraic} becomes
\begin{multline}
\label{eq:lipschitz}
\vol(\calZ_f) = \frac12 \int_M
\left(
\frac{\abs f}{\eta_f^3} \left( \norm{\grad f}^2 - \abs f \lapla f + (\lapla f)^2 - \norm{\Hess f}^2 - \Ric(\grad f, \grad f) \right) + {}
\right.\\\left.
3 \eta_f^{-5} \left( \vphantom{(\lapla f)^2} f \Hess(f)(\grad f, \grad f) + \Hess(f)(\grad f, \levici_{\grad f} \grad f) - {}
\right.\right.\\\left.\left.
(\lapla f)^2 (f \norm{\grad f}^2 + \Hess(f)(\grad f, \grad f)) \right)
\vphantom{\frac{\abs f}{\eta_f^3}} \right)
\vol_M.
\end{multline}
In the case of a flat torus, this is~\cite[Prop.~7]{angst}.

\begin{remark}
These formulas can also be written in terms of the tracefree Hessian.
Recall that $\Hess^0(f) = \Hess(f) - \frac{\lapla f}{n} \id$.
A tracefree linear map is Hilbert--Schmidt-orthogonal to the identity, so $\norm{\Hess f}^2 = \frac{(\lapla f)^2}{n} + \norm{\Hess^0 f}^2$.
\end{remark}

In Equation~\eqref{eq:lipschitz}, the integrand is a Lipschitz continuous functional of $f \in \calC^2_{0-\reg}(M, \R)$ (see next section for the precise setting), so one can apply techniques of the Malliavin calculus (see~\cite{angst}).
The only difference between~\eqref{eq:lipschitz} and~\cite[Prop.~7]{angst} is the additional term involving the Ricci curvature, $\abs f \eta_f^{-3} \Ric(\grad f, \grad f)$, and this term is in the required domain of the Malliavin calculus by the same proof as~\cite[Lem.~2 p.~26]{angst}.
Therefore, \cite[Thm.~1]{angst} holds on any compact Riemannian manifold.
Similarly, Formula~\eqref{eq:corner} shows that the extra boundary terms are not problematic, so~\cite[Thm.~1]{angst} holds on any compact Riemannian manifold with corners, a generalization which includes~\cite[Thm.~2]{angst} as a special case.

\section{Continuity of the intrinsic volumes of sublevel sets}
\label{sec:continuity}

\subsection{Review of function space topologies}

For this subsection, we refer to~\cite[Ch.~II]{hirsch} for details.
Let $(M, g)$ be a Riemannian manifold and $p \in \overline{\N}$.
We will use two different topologies on the set $\calC^p(M, \R)$, the \define{uniform $\calC^p$-topology}, and the finer (Whitney) \define{strong $\calC^p$-topology}.
The resulting topological spaces will be denoted with the subscripts $U$ and $S$ respectively.
The first is a completely metrizable group and the second is a Baire topological group (countable intersections of dense open subsets are dense).
In particular, there is a notion of Lipschitz continuity (by which we mean ``local Lipschitz continuity''\footnote{Note that (local) Lipschitz continuity does not imply uniform continuity when the domain is not complete, as the sign function on $\R_{\neq 0}$ shows.}) for maps between the first space and other metric spaces.
The product $\calC^p(M, \R) \times \R$ will be considered with the corresponding product topology, and the sets $\Reg^p(M, \R)$ and $\Reg_c^p(M, \R)$ defined in Equations~\eqref{def:reg} and~\eqref{def:regc} with the corresponding subspace topologies.

If $0 \leq i \leq p$ and $f \in \calC^p(M, \R)$, then $\levici^i f \in \Gamma^{(p-i)}(\bigotimes^i TM \to M)$.
We denote by $\norm{\levici^i f(x)}$ the norm of this multilinear form induced by the norm~$g_x$ on~$T^*_xM$, and by $\norm{\levici^i f}_\infty$ the supremum of these norms for $x \in M$.
Let $p \in \N$.
For the uniform $\calC^p$-topology, a neighborhood basis of 0 is given by
\begin{equation}
U_p(\epsilon) \coloneqq \{ f \in \calC^p(M, \R) \mid \sum_{i=0}^p \norm{\levici^i f}_\infty < \epsilon \}
\end{equation}
for $\epsilon \in \R_{>0}$.
For the strong $\calC^p$-topology, a neighborhood basis of 0 is given by
\begin{equation}
S_p(\epsilon) \coloneqq \{ f \in \calC^p(M, \R) \mid \forall x \in M \; \sum_{i=0}^p \norm{\levici^i f(x)} < \epsilon(x) \}
\end{equation}
for $\epsilon \in \calC^0(M, \R_{>0})$.
The uniform and strong $\calC^{\infty}$-topologies are obtained as the unions of the corresponding $\calC^p$-topologies.

The strong $\calC^p$-topology does not depend on the Riemannian metric (it could actually be defined using norms of usual derivatives in charts).
These topologies differ in the control of functions at infinity (in particular, they are equal when $M$ is compact).
Results involving the strong topology will often remain true for the uniform topology when restricted to proper functions.

The strong topology has the disadvantage that the inclusion of constant functions, $\R \to \calC^p(M, \R)_S, a \mapsto (x \mapsto a)$, is not continuous.
For example, the function
\begin{equation}
\label{eq:tau}
\begin{aligned}
\tau \colon \calC^p(M, \R)_U \times \R &\longrightarrow \calC^p(M, \R)_U\\
(f, a) &\longmapsto f - a.
\end{aligned}
\end{equation}
is Lipschitz continuous, but the analogous result (for mere continuity) with the strong topology does not hold.
Therefore, when studying sublevel sets at varying heights, we will use the uniform topology and we will restrict our attention to proper functions, and when studying sublevel sets at a fixed height, we will use the strong topology if we want to allow nonproper functions.

We denote by $\calC^p_\p(M, \R)$ (resp.\ $\calC^p_\bb(M, \R)$) the set of proper (resp.\ bounded below) functions in $\calC^p(M, \R)$, and by $\calC^p_\pb(M, \R) \coloneqq \calC^p_\p(M, \R) \cap \calC^p_\bb(M, \R)$ the set of proper bounded below functions.
Similarly, we set $\Reg_*^p \coloneqq \Reg^p(M, \R) \cap (\calC_*^p(M, \R) \times \R)$ for $* = \bb, \p, \pb$.

\begin{proposition}
\label{prop:clopen}
Let $p \in \overline{\N}$.
At least one (resp.\ all) sublevel set(s) of $f \in \calC^p(M, \R)$ is/are compact if and only if $f$ is bounded below (resp.\ proper bounded below).
In particular, $\Reg_\pb^p(M, \R) \subseteq \Reg_c^p(M, \R)$.
The subsets $\calC^p_\bb(M, \R)$ and $\calC^p_\p(M, \R)$ and $\calC^p_\pb(M, \R)$ are open and closed in $\calC^p(M, \R)_U$.
\end{proposition}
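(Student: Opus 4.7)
The plan is to prove the three assertions in order: the characterizations of boundedness below via sublevel-set compactness, the inclusion of regular-value subsets, and the open-and-closed properties in the uniform $\calC^p$-topology.

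First I would settle the sublevel-set characterizations. If $M^a_f$ is compact for some $a$, then $f$ attains its minimum on $M^a_f$ by continuity, while $f > a$ on the complement, so $f$ is bounded below on $M$; conversely, if $f \geq m$, then $M^{m-1}_f = \emptyset$ is (trivially) compact. For the proper-bounded-below version, given compactness of every $M^a_f$, boundedness below follows from the first part, and for any compact $K \subseteq \R$ the set $f^{-1}(K)$ is a closed subset of $M^{\max K}_f$, hence compact, so $f$ is proper. Conversely, if $f$ is proper with $f \geq m$, then $M^a_f = f^{-1}(\intervCC{m}{a})$ is the preimage of a compact set under a proper map, hence compact. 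The inclusion $\Reg_\pb^p(M, \R) \subseteq \Reg_c^p(M, \R)$ is then immediate, since $(f, a) \in \Reg_\pb^p$ makes every sublevel set, in particular $M^a_f$, compact.

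For the topological part, the key ingredient is that a uniform $\calC^p$-neighborhood of $f$ contains all $g$ with $\norm{f-g}_\infty < \epsilon$, so I work entirely at the $\calC^0$-level. Openness of $\calC^p_\bb(M, \R)$ follows from the implication $f \geq m \Rightarrow g \geq m - \epsilon$; closedness (i.e., openness of the complement) follows from the fact that if $f(x_n) \to -\infty$ then $g(x_n) \to -\infty$ as well. For $\calC^p_\p(M, \R)$, openness uses the inclusion $g^{-1}(K) \subseteq f^{-1}(K + \intervCC{-\epsilon}{\epsilon})$ to exhibit $g^{-1}(K)$ as a closed subset of a set compact by properness of $f$. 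For closedness, if $f$ is not proper, then some $f^{-1}(K)$ is not compact; using metrizability of the (paracompact, Hausdorff, locally Euclidean) manifold $M$, non-compactness yields a sequence $(x_n) \subseteq f^{-1}(K)$ with no convergent subsequence in $M$, and this same sequence lies in $g^{-1}(K + \intervCC{-\epsilon}{\epsilon})$, showing $g$ is not proper. Finally, $\calC^p_\pb = \calC^p_\p \cap \calC^p_\bb$ inherits both properties.

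The step I expect to be the main obstacle is the closedness of $\calC^p_\p(M, \R)$: one has to convert the abstract non-compactness of $f^{-1}(K)$ into a concrete escaping sequence whose image under the perturbation $g$ still lies in a compact subset of $\R$, which is precisely where the metrizability of the ambient manifold enters in order to replace non-compactness by the existence of a sequence without convergent subsequence.
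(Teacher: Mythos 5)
Your proof is correct, and it simply supplies the details for a statement whose proof the paper dismisses as ``Obvious''; all the steps (empty sublevel sets being compact, $f^{-1}(K)\subseteq M^{\max K}_f$ for properness, the $\calC^0$-level perturbation arguments $g^{-1}(K)\subseteq f^{-1}(K+\intervCC{-\epsilon}{\epsilon})$, and metrizability of a paracompact Hausdorff manifold to pass between compactness and sequential compactness) are the expected ones and are carried out correctly.
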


\begin{proof}
Obvious.
\end{proof}

\begin{example}
For a given function, the set of real numbers such that the associated sublevel set is compact can be any downset.
Indeed, consider the functions on $\R$ which send $x$ to respectively $x$ or $a$ or $a + e^x$ or $x^2$.
The sets of real numbers such that the associated sublevel set is compact are $\varnothing$ and $\intervOO{-\infty}{a}$ and $\intervOC{-\infty}{a}$ and $\R$ respectively.
If in the second case one requires that~$a$ be a regular value, then consider $x \mapsto a - e^x$.
\end{example}

Recall that $\eta_f$ was defined by Equation~\eqref{eq:eta} and $\tau$ by Equation~\eqref{eq:tau}.

\begin{lemma}
The function $\inf \colon \calC^0(M, \R)_U \to \R \cup \{-\infty\}$ is Lipschitz continuous.
The function $\eta \colon \calC^1(M, \R)_U \to \calC^0(M, \R)_U$ is Lipschitz continuous.
Let $p \in \overline{\N}$.
The function $\eta \colon \calC^{1+p}_{0-\reg}(M, \R)_X \to \calC^p(M, \R_{>0})_X$ is Lipschitz continuous for $X = U$ and continuous for $X = S$.
The function $m \coloneqq \inf \circ \eta \circ \tau \colon \calC^{1+p}(M, \R)_U \times \R \to \R$ is Lipschitz continuous.
\end{lemma}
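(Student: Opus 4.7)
My plan is to prove the four claims in order; the fourth follows from the first two by composition. For the first, I would invoke the standard inequality $\abs{\inf f - \inf g} \le \norm{f-g}_\infty$ (with the convention $\inf f = -\infty$ when $f$ is unbounded below), giving $1$-Lipschitzianity. For the second, I would observe that $\eta(f)(x)$ is the Euclidean norm of the $1$-jet $(f(x), \grad f(x)) \in \R \oplus T^*_x M$; the reverse triangle inequality applied pointwise yields $\abs{\eta(f)(x) - \eta(g)(x)} \le \abs{(f-g)(x)} + \norm{\grad(f-g)(x)}$, and taking suprema gives a universal Lipschitz constant.

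For the third claim, the technical heart, I would first observe that $\eta(f)^2 = f^2 + \norm{\grad f}^2$ is polynomial in $(f, \grad f)$, hence a locally Lipschitz map $\calC^{1+p} \to \calC^p$; the $0$-regularity hypothesis makes $\eta(f) > 0$ pointwise, so $\eta = \sqrt{\eta^2}$ is a well-defined $\calC^p$ function. For the Lipschitz bound in the $X = U$ case, I would compute the Fréchet derivative
\begin{equation*}
D\eta(f)[h] = \frac{f h + \innerp{\grad f}{\grad h}}{\eta(f)}
\end{equation*}
and estimate its operator norm $\calC^{1+p} \to \calC^p$. The pointwise estimates $\abs{f}/\eta(f) \le 1$ and $\norm{\grad f}/\eta(f) \le 1$ (both immediate from $\eta^2 = f^2 + \norm{\grad f}^2$) handle the base case $p = 0$. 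For $p \ge 1$, I would iterate using the Leibniz identity $2\eta\,\levici^k\eta = \levici^k(\eta^2) - \sum_{0 < j < k}\binom{k}{j}\,\levici^j\eta \odot \levici^{k-j}\eta$ (obtained by differentiating $\eta \cdot \eta = \eta^2$ with the Leibniz rule) to express higher covariant derivatives of $\eta$ recursively in terms of derivatives of $f$. The mean value formula $\eta(g) - \eta(f) = \int_0^1 D\eta(f + t(g-f))[g-f]\,dt$ then converts the resulting operator bound into a local Lipschitz estimate. Continuity in the strong case $X = S$ follows by applying the uniform reasoning on precompact open subsets of $M$, using the local character of the strong topology.

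The main obstacle is the inductive bookkeeping for $p \ge 1$: naive bounds on $\levici^k(D\eta(f)[h])$ involve inverse powers of $\eta(f)$ that need not be globally bounded when $\inf_M \eta(f_0) = 0$. The proof must exploit the algebraic structure by which each factor of $1/\eta(f)$ is paired in the numerator with a factor proportional to $f$ or $\grad f$, so that the ``angular'' quantities $\alpha \coloneqq f/\eta(f)$ and $\beta \coloneqq \grad f/\eta(f)$---which satisfy $\alpha^2 + \norm{\beta}^2 = 1$---absorb the would-be singularities and leave finite pointwise bounds in terms of the norms of $\levici^j f$ for $j \le p+1$.

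For the fourth claim, $\tau$ is manifestly Lipschitz via $\norm{\tau(f, a) - \tau(g, b)}_{\calC^{1+p}} \le \norm{f-g}_{\calC^{1+p}} + \abs{a-b}$, and $m = \inf \circ \eta \circ \tau$ is then a composition of locally Lipschitz maps by the previous statements (statement 2 suffices here, since $\eta$ feeds into $\inf$ which lands in $\R$).
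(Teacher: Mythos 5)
The paper's own ``proof'' of this lemma is the single word ``Obvious'', so there is no argument to compare yours against; on the merits, your treatment of the first, second and fourth claims is correct and essentially optimal (in particular the observation that the fourth claim only needs the second, since $\inf$ forgets all derivatives, is exactly right). The problem is the third claim for $p \geq 1$ in the uniform topology. You correctly identify the danger --- differentiating $D\eta(f)[h] = (fh + \innerp{\grad f}{\grad h})/\eta_f$ produces inverse powers of $\eta_f$, and $\inf_M \eta_{f_0}$ may vanish on a noncompact $M$ even when $0$ is a regular value --- but the ``algebraic structure'' you then invoke to absorb these singularities does not exist. Concretely, $\grad\big(D\eta(f)[h]\big)$ contains the term $\eta_f^{-1}\,\Hess(f)(\grad h, -)$, in which $\Hess(f)$ is contracted with $\grad h$ rather than with $\grad f$, so neither of your bounded angular factors $f/\eta_f$, $\grad f/\eta_f$ appears and nothing compensates the factor $\eta_f^{-1}$.

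This is not a repairable bookkeeping issue: the assertion you are trying to prove fails for $p \geq 1$ on noncompact $M$. Take $M = \R$ and a smooth $f_0 > 0$ (so $0$ is vacuously a regular value) with $f_0(x) = \delta_n + \delta_n(x-n) + \tfrac12(x-n)^2$ near each integer $n$, where $\delta_n = 2^{-n}$; such an $f_0$ is easily constructed. Since in dimension one $\eta_f' = f'(f + f'')/\eta_f$, one gets $\eta_{f_0}(n) = \delta_n\sqrt2$ and $\eta_{f_0}'(n) = (1+\delta_n)/\sqrt2$. For the constant perturbation $g_\epsilon = f_0 + \epsilon$, at uniform $\calC^2$-distance $\epsilon$ from $f_0$, one computes $\eta_{g_\epsilon}'(n) = \delta_n(1+\delta_n+\epsilon)\big((\delta_n+\epsilon)^2 + \delta_n^2\big)^{-1/2} \to 0$ as $n \to \infty$, whence $\norm{\eta_{g_\epsilon}' - \eta_{f_0}'}_\infty \geq 1/\sqrt2$ for every $\epsilon > 0$: the map $\eta \colon \calC^2_{0-\reg}(\R,\R)_U \to \calC^1(\R,\R_{>0})_U$ is not even continuous at $f_0$, let alone Lipschitz. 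Your Fr\'echet-derivative scheme does go through verbatim under the additional hypothesis $\inf_M \eta_{f_0} > 0$ (automatic when $M$ is compact, and exactly the condition $m(f_0, 0) > 0$ that is available in the paper's later applications), since then $\eta_f^{-1}$ is uniformly bounded on a uniform $\calC^1$-neighborhood of $f_0$ and all your estimates close; the claim should be proved, and can only be proved, under that restriction. The $p = 0$ case and the $X = S$ continuity statement are unaffected (a constant perturbation is not small in the strong topology, and there the control function $\epsilon(x)$ may be taken to decay faster than $\eta_{f_0}$).
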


\begin{proof}
Obvious.
\end{proof}

\begin{proposition}
Let $p \in \overline{\N_{\geq 1}}$.
The subset $\Reg_\p^p(M, \R)$ is open and dense in $\calC_\p^p(M, \R)_U \times \R$.
The subset $\calC_{0-\reg}^p(M, \R)$ (resp.\ $\calC_{0-\reg, c}^p(M, \R)$) is open and dense in $\calC^p(M, \R)_S$ (resp.\ $\calC_{0-c}^p(M, \R)_S$).
The three openness results actually hold for the (uniform or strong) $\calC^0$-topology.
\end{proposition}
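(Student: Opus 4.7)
The statement bundles three open-dense assertions, which I will address separately, followed by a remark on the final $\calC^0$-refinement.

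For the first assertion, openness of $\Reg^p_\p(M, \R)$ in $\calC^p_\p(M, \R)_U \times \R$: by Proposition~\ref{prop:clopen}, $\calC^p_\p(M, \R)$ is already open in the uniform $\calC^0$-topology, so it suffices to show that regularity of the value is an open condition. Fix $(f, a) \in \Reg^p_\p(M, \R)$: properness forces $f^{-1}(a)$ to be compact, and continuity of $\grad f$ then yields $\eta, \delta > 0$ such that $K \coloneqq f^{-1}(\intervCC{a - 3\eta}{a + 3\eta})$ is compact and $\norm{\grad f} \geq \delta$ on $K$. For any $(g, b)$ with $\norm{f - g}_\infty < \eta$, $\norm{\grad f - \grad g}_\infty < \delta/2$, and $|a - b| < \eta$, the triangle inequality forces $g^{-1}(b) \subseteq K$ and $\norm{\grad g} \geq \delta/2$ there, so $b$ is regular for $g$. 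Density is then a direct application of Sard's theorem: for fixed $f$, regular values are dense in $\R$ (after a preliminary $\calC^\infty$-approximation if $p < \dim M$, using that smooth functions are uniformly $\calC^p$-dense), so $(f, a)$ is approximated by $(f, a')$ with $a'$ regular.

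For the second and third assertions, the strong topology is essential because $f^{-1}(0)$ need not be compact. Fix $f \in \calC^p_{0-\reg}(M, \R)$ and set $\Phi \coloneqq |f| + \norm{\grad f}$; then $\Phi \in \calC^0(M, \R_{>0})$ by regularity of $0$. For any $g$ with $g - f \in S_1(\Phi/3)$, the chain of implications
\[
g(x) = 0 \Longrightarrow |f(x)| < \tfrac{1}{3}\Phi(x) \Longrightarrow \norm{\grad f(x)} > \tfrac{2}{3}\Phi(x) \Longrightarrow \norm{\grad g(x)} > \tfrac{1}{3}\Phi(x) > 0
\]
establishes openness. The compactness-preserving variant also requires strong-topology openness of $\calC^p_{0-c}(M, \R)$, which I would deduce from a parallel pointwise estimate forcing the $0$-sublevel set of $g$ to remain inside a compact core of that of $f$ (again combining $\calC^0$ closeness with the continuous positive lower bound $|f|$ away from $f^{-1}(0)$).

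Density in the strong topology is the main obstacle, because constants are not continuously embedded in $\calC^p_S$, so one cannot simply shift $f$ by a Sard-regular value of $f$. The workaround is a localized shift: given a strong neighborhood $f + S_p(\epsilon)$, approximate $f$ by a smooth $\tilde f \in f + S_p(\epsilon/2)$, pick by Sard a small regular value $a$ of $\tilde f$, and select a smooth compactly supported bump $\rho$ equal to $1$ on a piece of $M$ large enough to contain $\tilde f^{-1}(\intervCC{-2|a|}{2|a|})$ (which is compact in the $c$-case); then $g \coloneqq \tilde f - a\rho$ lies in $f + S_p(\epsilon)$ for $|a|$ small, and a case analysis on $\{\rho = 1\}$, $\operatorname{supp}\rho \setminus \{\rho = 1\}$, and $M \setminus \operatorname{supp}\rho$ confirms that $0$ is regular for $g$, the transition region handled by a further small genericity perturbation of $\rho$ via Sard applied to the parameter $a$. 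Finally, each of the three openness arguments invoked only first-order estimates on $f - g$ and $\grad f - \grad g$, with no higher-order data, so the openness persists under the weaker (uniform or strong) topology controlling only values and first derivatives, which is the content of the last sentence.
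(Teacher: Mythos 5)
Your openness arguments and the density of $\Reg^p_\p$ are correct and essentially the paper's: where you argue pointwise with the compact slab $f^{-1}(\intervCC{a-3\eta}{a+3\eta})$ and the lower bound $\abs{f}+\norm{\grad f}>0$, the paper packages the same estimates into the Lipschitz continuity of $m=\inf\circ\eta\circ\tau$ and of $\eta$, and reads off openness as a preimage of $\R_{>0}$; the first density claim is Sard plus a vertical shift in both cases.

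The genuine gap is in the density of $\calC^p_{0-\reg}(M,\R)$ in $\calC^p(M,\R)_S$. Your localized shift $g=\tilde f-a\rho$ with a \emph{single compactly supported} bump $\rho$ requires $\tilde f^{-1}(\intervCC{-2\abs{a}}{2\abs{a}})\subseteq\{\rho=1\}$, which is available only when that set is compact; you flag this yourself with ``(which is compact in the $c$-case)'', but the statement also asserts density in all of $\calC^p(M,\R)_S$, where $f$ need not be proper and $f^{-1}(0)$ can be non-compact. There, no finite collection of bumps reaches the whole zero set, and replacing $\rho$ by a globally supported function brings back exactly the obstruction you identified (a non-localized perturbation of size $\abs{a}$ is not small in the strong topology, whose gauge $\epsilon(x)$ may decay at infinity). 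The paper's fix is a genuinely multi-parameter perturbation: a locally finite partition of unity $(\phi_i)$, the family $\Phi(x,(\lambda_i))=f(x)+\epsilon(x)\sum_i\lambda_i\phi_i(x)$, which is submersive, and the parametric transversality theorem to conclude that almost every small $(\lambda_i)$ makes $0$ regular --- each perturbation being pointwise dominated by $\epsilon(x)$ and hence strongly small. (Conversely, in the compact case your construction needs no ``further genericity perturbation of $\rho$'': on the transition annulus $\abs{g}\geq\abs{\tilde f}-\abs{a}>\abs{a}>0$, so $g$ has no zeros there at all.) A last, smaller point: your closing sentence justifies openness for the topology ``controlling values and first derivatives'', i.e.\ the $\calC^1$-topology, whereas the statement claims the $\calC^0$-topology; your estimates (and, as far as I can see, the paper's, which rest on Lipschitz continuity of $\eta$ on $\calC^1(M,\R)_U$) do not give $\calC^0$-openness, and a small bump added to $x\mapsto x$ on $\R$ creating a degenerate zero shows that $\calC^0$-openness of $\calC^1_{0-\reg}$ actually fails, so this is presumably a typo for $\calC^1$ that you should not try to prove as stated.
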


\begin{proof}
One has, $\Reg_\p^p(M, \R) = \left(m|_{\calC_\p^p(M, \R)_U}\right)^{-1}\left(\R_{>0}\right)$, which is therefore open in $\calC_\p^p(M, \R)_U$.
Similarly, $\calC_{0-\reg}^p(M, \R) = \eta^{-1}\left(\calC^{p-1}(M, \R_{>0})\right)$ is open in $\calC_\p^p(M, \R)_S$.

As for density, by the Morse--Sard theorem, if $f \in \calC^n(M, \R)$, then the set of regular values of $f$ is dense.
This implies that $\Reg_\p^p(M, \R)$ is dense in $\calC^{\max(n, p)}(M, \R)_U \times \R$, which is dense in $\calC^p(M, \R)_U \times \R$ (see~\cite[Thm.~II.2.6]{hirsch}).

For $\calC_{0-\reg}^p(M, \R)$, one can use the transversality theorem as follows.
Let $f \in \calC^p(M, \R)$ and $\epsilon \in \calC^1(M, \R_{>0})$.
Let $(\phi_i)_{i \in I}$ be a smooth partition of unity subordinated to some locally finite atlas of $M$.
Consider the map $\Phi \colon M \times \R^{\abs{I}} \to \R, (x, (\lambda_i)) \mapsto f(x) + \epsilon(x) \sum_i \lambda_i \phi_i(x)$.
Then, $\Phi$ is submersive, so for almost all tuples $(\lambda_i)$, the map $\Phi(-, (\lambda_i))$ is transverse to 0.

The proofs work similarly with the compactness requirement added.
\end{proof}

\subsection{Continuity of the intrinsic volumes}

We begin with the special cases of the volume and the nodal volume, which will be needed in the proof of the general case.
We actually prove a more general statement, where $\triangle$ denotes the symmetric difference of two sets.

\begin{proposition}
\label{prop:volume}
Let $(M, g)$ be a Riemannian manifold.
The functions
\begin{align}
\Reg_\pb^1(M, \R)_U^2 &\longrightarrow \R_{\geq 0}\\
\big( (f, a), (g, b) \big) &\longmapsto \vol(M^a_f \triangle M^b_g)\notag
\shortintertext{and}
\Reg_\pb^1(M, \R)_U &\longrightarrow \R_{\geq 0}\\
(f, a) &\longmapsto \vol(M^a_f)\notag
\end{align}
are continuous (resp.\ Lipschitz continuous) when the domains are given the uniform $\calC^0$ (resp.\ $\calC^1$)-topology.
The functions
\begin{align}
\calC_{0-\reg, c}^1(M, \R)_S^2 &\longrightarrow \R_{\geq 0}\\
(f, g) &\longmapsto \vol(M_f^0 \triangle M_g^0)\notag
\shortintertext{and}
\calC_{0-\reg, c}^1(M, \R)_S &\longrightarrow \R_{\geq 0}\\
f &\longmapsto \vol(M_f^0)\notag
\end{align}
are continuous when the domains are given the strong $\calC^0$-topology.

The function
\begin{align}
\Reg_\pb^1(M, \R)_U &\longrightarrow \R_{\geq 0}\\
(f, a) & \longmapsto \vol(\partial M^a_f)\notag
\end{align}
is continuous when the domain is given the uniform $\calC^1$-topology.
\end{proposition}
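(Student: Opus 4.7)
The plan is to reduce the volume statements to the corresponding symmetric-difference statements via the elementary bound $\abs{\vol(M^a_f) - \vol(M^b_g)} \leq \vol(M^a_f \triangle M^b_g)$, then to prove the symmetric-difference estimates by a direct geometric inclusion, and to treat the boundary volume separately by a normal-graph parametrization.

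The cornerstone of the symmetric-difference argument is the inclusion
\begin{equation*}
M^a_f \triangle M^b_g \subseteq V_\delta := \{x \in M : \abs{f(x) - a} \leq \delta\}, \qquad \delta := \norm{f - g}_\infty + \abs{a - b},
\end{equation*}
which follows in one line: if $f(x) \leq a$ and $g(x) > b$, then $0 \leq a - f(x) < (a - g(x)) + \norm{f - g}_\infty \leq \abs{a - b} + \norm{f - g}_\infty$, and symmetrically. In the strong $\calC^0$ nodal case, the same inclusion holds pointwise with $\delta$ replaced by a continuous positive function. For uniform $\calC^0$ continuity on $\Reg_\pb^1(M, \R)$, properness and lower-boundedness of $f$ make $V_\delta$ compact for small $\delta > 0$, and $\bigcap_{\delta > 0} V_\delta = f^{-1}(a)$ is a $\calC^1$-hypersurface, hence a null set; continuity of measure from above yields $\vol(V_\delta) \to 0$.

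For uniform $\calC^1$ Lipschitz continuity, since $a$ is regular and $f^{-1}(a)$ is compact, one obtains $c > 0$ and a tubular neighborhood $U$ of $f^{-1}(a)$ on which $\norm{\grad f} \geq c$; for small $\delta$, $V_\delta \subseteq U$, and the coarea formula gives
\begin{equation*}
\vol(V_\delta) = \int_{a-\delta}^{a+\delta} \int_{f^{-1}(t)} \frac{\vol_{f^{-1}(t)}}{\norm{\grad f}}\, dt \leq \frac{2\delta}{c}\sup_{\abs{t-a}\leq\delta} \vol(f^{-1}(t)),
\end{equation*}
yielding a Lipschitz bound $\vol(M^a_f \triangle M^b_g) \leq C(\norm{f - g}_\infty + \abs{a - b})$. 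For the strong $\calC^0$ nodal statement, given $\epsilon > 0$ one chooses a compact tubular neighborhood $N$ of $f^{-1}(0)$ with $\vol(N) < \epsilon$ and defines $\delta(x) := \abs{f(x)}/2$ outside $N$ (and arbitrarily positive inside $N$); any $g$ with $\abs{g - f} < \delta$ pointwise preserves the sign of $f$ outside $N$, hence $M^0_f \triangle M^0_g \subseteq N$.

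For the boundary volume $\vol(\partial M^a_f)$ in uniform $\calC^1$, parametrize a tubular neighborhood $U$ of the compact hypersurface $f^{-1}(a)$ via $\Phi(\sigma, s) := \exp_\sigma(s \nu_f(\sigma))$ with $\nu_f := \grad f / \norm{\grad f}$. For $(g, b)$ close to $(f, a)$ in $\calC^1$, the derivative $\partial_s(g \circ \Phi)|_{s = 0} = \innerp{\grad g}{\nu_f}$ is bounded below, so the implicit function theorem yields a $\calC^1$-graphing function $s_{g, b} \colon f^{-1}(a) \to \R$ parametrizing $g^{-1}(b) \cap U$, with $s_{g, b} \to 0$ in $\calC^1$ as $(g, b) \to (f, a)$. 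Properness of $f$ forces $g^{-1}(b) \subseteq U$ once $(g, b)$ is $\calC^0$-close to $(f, a)$; the area formula then expresses $\vol(g^{-1}(b))$ as an integral over $f^{-1}(a)$ of a continuous functional of $s_{g, b}$ and $\grad s_{g, b}$, giving the desired convergence. The main obstacle throughout is the local uniformity of constants in the $\calC^1$-topology (lower bound $c$ on $\norm{\grad f}$ near $f^{-1}(a)$, and control of $\vol(f^{-1}(t))$ for $t$ near $a$); this is handled by the same implicit-function argument, which transfers nearby level sets of nearby functions to graphs over the fixed compact hypersurface $f^{-1}(a)$, along which all relevant quantities vary continuously.
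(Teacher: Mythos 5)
Your overall strategy coincides with the paper's: reduce to the diagonal, trap $M^a_f \triangle M^b_g$ in the thin band $\{\abs{f-a} \leq \delta\}$ with $\delta = \norm{f-g}_\infty + \abs{a-b}$, and estimate the volume of that band --- by continuity from above of the measure for mere continuity, and by a quantitative collar estimate for the Lipschitz claim (your coarea computation is an integrated form of the paper's argument, which flows along $-\grad h$ to show the symmetric difference lies in a tube of radius $\norm{h-k}_\infty/\epsilon$ around $\calZ_h$). The strong-topology case via a compact neighborhood $N$ of $\calZ_f$ of small volume and $\delta := \abs{f}/2$ off $N$ is likewise the paper's neighborhood lemma. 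These parts are correct, modulo recording the elementary inequality $\abs{\vol(A\triangle B)-\vol(A'\triangle B')} \leq \vol(A\triangle A') + \vol(B\triangle B')$ needed to pass from the diagonal to the general two-variable statement.

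The genuine gap is in the last part, the continuity of $(f,a)\mapsto\vol(\partial M^a_f)$, on which your Lipschitz estimate also depends (you need $\vol(g^{-1}(t))$ bounded locally uniformly in $(g,t)$). You parametrize a collar of $f^{-1}(a)$ by $\Phi(\sigma,s)=\exp_\sigma(s\,\nu_f(\sigma))$ with $\nu_f = \grad f/\norm{\grad f}$. But $f$ is only of class $\calC^1$, so $\nu_f$ is only continuous and $f^{-1}(a)$ is only a $\calC^1$-hypersurface: $\Phi$ is merely continuous in $\sigma$, the implicit function theorem then produces graphing functions $s_{g,b}$ that are only $\calC^0$, and the phrase ``integral of a continuous functional of $s_{g,b}$ and $\grad s_{g,b}$'' has no meaning. (Injectivity of $\Phi$ on a collar of uniform width is also not automatic for $\calC^1$-hypersurfaces, whose curvature can be unbounded.) The paper avoids this by covering the compact set $f^{-1}(a)$ by finitely many \emph{smooth} coordinate charts in which it is the graph of a $\calC^1$-function over a coordinate hyperplane; the implicit function theorem in these smooth charts yields $\calC^1$ graphing functions for nearby $(g,b)$, depending continuously on $(g,b)$ in the uniform $\calC^1$-topology, and the area formula applied chart by chart with a partition of unity gives the continuity of the level-set volume. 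Replacing your normal-exponential parametrization by this chart-based graph representation (or by the flow of a smooth vector field transverse to $f^{-1}(a)$) repairs the argument, after which the rest of your proof closes as intended.
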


\begin{remark}
The continuity of nodal volumes was proved in the Euclidean case in~\cite[Thm.~3]{app}, with a similar proof.
\end{remark}

We first prove a lemma.

\begin{lemma}
\label{lem:neighb}
Let $f \in \calC^1_{0-\reg, c}(M, \R)$.
For any neighborhood $U$ of $\calZ_f$, there exists an open neighborhood $V$ of $f$ in the strong $\calC^0$-topology (and, if $f$ is proper, in the uniform $\calC^0$-topology) such that for any $h, k \in V$, one has $\calZ_h \subseteq U$ and $M^0_h \triangle M^0_k \subseteq U$.
\end{lemma}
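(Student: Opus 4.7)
The essential observation is that $\calZ_f$ is compact (as a closed subset of the compact $M^0_f$) and that $\abs{f}$ is continuous on $M$, strictly positive on $M \setminus U$, and vanishes exactly on $\calZ_f \subseteq U$. Without loss of generality I may take $U$ open. My plan is to build a neighborhood $V$ of $f$ for which every $h \in V$ satisfies $\abs{h - f} < \abs{f}$ pointwise on $M \setminus U$: this already forces $h$ to have the same sign as $f$ there, hence $\calZ_h \subseteq U$, and applied to both $h$ and $k$ it shows that $M^0_h$ and $M^0_k$ agree outside $U$, giving $M^0_h \triangle M^0_k \subseteq U$.

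For the strong $\calC^0$-topology I would construct a continuous function $\epsilon \colon M \to \R_{>0}$ with $\epsilon \leq \abs{f}$ on $M \setminus U$. Since $M$ is paracompact Hausdorff, hence normal, and the closed sets $\calZ_f$ and $M \setminus U$ are disjoint, Urysohn's lemma supplies a continuous $\psi \colon M \to [0,1]$ with $\psi = 0$ on $\calZ_f$ and $\psi = 1$ on $M \setminus U$. Setting
\[
\epsilon \coloneqq \psi \, \abs{f} + (1 - \psi)
\]
yields a continuous function that equals $\abs{f}$ on $M \setminus U$ and is strictly positive everywhere: on $\calZ_f$ it equals $1$, and elsewhere either $\psi < 1$ (so $1-\psi > 0$) or $\psi = 1$, the latter forcing $x \in M \setminus \calZ_f$ and hence $\abs{f(x)} > 0$. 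Then $V \coloneqq f + S_0(\epsilon)$ is the desired strong $\calC^0$-neighborhood of $f$.

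For the uniform $\calC^0$-topology, assuming $f$ is proper, I claim $\delta \coloneqq \inf_{x \in M \setminus U} \abs{f(x)} > 0$. Otherwise a sequence $(x_n)$ in $M \setminus U$ with $f(x_n) \to 0$ would eventually lie in $f^{-1}([-1,1])$, which is compact by properness, so some subsequence would converge to $x_\infty \in M \setminus U$ (closed) with $f(x_\infty) = 0$, contradicting $\calZ_f \subseteq U$. Then $V \coloneqq f + U_0(\delta)$ is the required uniform $\calC^0$-neighborhood (and is automatically open in the finer strong topology as well). In both cases the two inclusions of the statement follow immediately from the sign argument of the first paragraph.

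The only mildly delicate step is the construction of $\epsilon$ in the strong case, where one must interpolate continuously between $\abs{f}$ (which vanishes on $\calZ_f$) and a strictly positive function while preserving positivity everywhere; the uniform case reduces to a routine compactness argument using the properness of $f$.
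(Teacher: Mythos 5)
Your proof is correct, and it rests on the same mechanism as the paper's: an admissible perturbation satisfying $\abs{h(x)-f(x)}<\abs{f(x)}$ away from $U$ forces $h$ to have the same strict sign as $f$ there, which simultaneously gives $\calZ_h\subseteq U$ and pins down $M^0_h\cap(M\setminus U)=M^0_f\cap(M\setminus U)$, hence $M^0_h\triangle M^0_k\subseteq U$. Where you differ is in how the gauge controlling the perturbation is built. The paper sandwiches the sublevel set between compacts $H\subset\subset M^0_f\subset\subset K$ with $M^0_f\setminus U\subseteq H$ and $K\subseteq M^0_f\cup U$, takes $\epsilon_0=\inf_H(-f)$ together with $\inf_{M\setminus K}f$ (proper case) or $\inf_{\partial K}f$ (nonproper case), and in the strong topology defines the gauge piecewise as $\epsilon_2$ on $K$ and $\min(\epsilon_2,f)$ outside, which requires a small continuity check at $\partial K$. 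You instead work directly on the closed set $M\setminus U$ and use Urysohn's lemma to interpolate between $\abs f$ there and the constant $1$ near $\calZ_f$, obtaining a globally positive continuous gauge in one stroke; your compactness argument via $f^{-1}([-1,1])$ in the proper case is the exact analogue of the paper's $\inf_{M\setminus K}f>0$. Your route is arguably cleaner (no piecewise definition, no choice of $H$ and $K$) at the modest cost of invoking normality of $M$ and reducing to $U$ open; both arguments are equally elementary and prove the same statement.
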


\begin{proof}
Let $f$ and $U$ be as in the statement.
Then $M_f^0$ is compact.
Let $H, K$ be compact subsets such that
$M_0^f \setminus U \subseteq H \subset\subset M_0^f \subset\subset K \subseteq M_0^f \cup U$.
By Proposition~\ref{prop:levsublev}, $f$ is strictly negative on $\interior{M^0_f}$, so $\epsilon_0 \coloneqq \inf \{ -f(x) \mid x \in H \} > 0$.

In the proper case, one has $\epsilon_1 \coloneqq \inf \{f(x) \mid x \in M \setminus K \} > 0$ and we set $V \coloneqq f +  U_0(\min(\epsilon_0, \epsilon_1))$.
In the nonproper case, let $\epsilon_2 \coloneqq \min(\epsilon_0, \inf \{f(x) \mid x \in \partial K \}) > 0$, let $\epsilon \in \calC^0(M, \R_{>0})$ be the function equal to $\epsilon_2$ on $K$ and $\min(\epsilon_2, f)$ on $M \setminus K$, and set $V \coloneqq f + S_0(\epsilon)$.
In both cases, if $h \in V$, one has $\calZ_h \subseteq K \setminus H \subseteq U$, and similarly $M^0_h \triangle M^0_k \subseteq U$ if $h, k \in V$.
\end{proof}

\begin{proof}[Proof of the proposition]
(i)
Let $\Phi$ be the first function in the proposition.
Since $M_f^a = M^0_{\tau(f, a)}$ and $\tau$ is Lipschitz continuous, it suffices to consider 0-sublevel sets.
Let $((f, 0), (g, 0)) \in \Reg_\pb^1(M, \R)^2$.
If $((h, 0), (k, 0)) \in \Reg_\pb^1(M, \R)^2$, then
$(M^0_f \triangle M^0_g) \triangle (M^0_h \triangle M^0_k) \subseteq (M^0_f \triangle M^0_h) \cup (M^0_g \triangle M^0_k)$,
so
$\abs{\Phi((f, 0), (g, 0)) - \Phi((h, 0), (k, 0))} \leq
\Phi((f, 0), (h, 0)) + \Phi((g, 0), (k, 0))$.
Therefore, it suffices to prove the Lipschitz continuity of~$\Phi$ on the diagonal.
Continuity is proved by Lemma~\ref{lem:neighb} and we defer the proof of Lipschitz continuity to the end of the proof.

(ii)
In the non-proper case, it is similarly sufficient to prove the continuity of the third function, say $\Psi$, on the diagonal, and the Lemma~\ref{lem:neighb} also proves the result.

(iii)
The second (resp.\ fourth) function is equal to $\Phi(-, (0,-1))$ (resp.\ $\Psi(-, 1)$), so it is continuous.

(iv)
We now prove continuity of nodal volumes.
By Lipschitz continuity of $\tau$, we can restrict our attention to $(f, 0) \in \Reg_\pb^1(M, \R)_U$.
Then, $\calZ_f$ is a compact $\calC^1$-hypersurface.
For any $x \in \calZ_f$, there exists a smooth chart $\phi \colon U \to \R^n$ such that $\phi(U) = \intervOO{0}{1}^d$ and $\phi(\calZ_f \cap U)$ is the graph of a $\calC^1$-function $f^\phi \colon \intervOO{0}{1}^{d-1} \to \intervOO{1/3}{2/3}$ with $\norm{\extdiff f^\phi}_\infty < 1$.
Since $\calZ_f$ is compact, there exists a finite cover $(U_i)_{i \in I}$ of $\calZ_f$ by such sets.
Let $(\psi_i)_{i \in I}$ be a smooth partition of unity subordinated to $(U_i)_{i \in I}$.

By Lemma~\ref{lem:neighb}, there exists a neighborhood $V$ of $f$ such that the nodal set of any $h \in V$ is included in $\bigcup_{i \in I} U_i$.
For any $i \in I$, by the implicit function theorem, there exists a neighborhood $V_i \subseteq V$ of $f$ in the $\calC^1$-topology such that for any $h \in V_i$, the hypersurface $\phi_i(U_i \cap \calZ_h)$ is the graph of a function $h_i \colon \intervOO{0}{1}^{d-1} \to \intervOO{0}{1}$ with $\norm{\extdiff h_i}_\infty < 2$.
One then has $\grad h_i = -\left(\frac{\partial({\phi_i}_* h)}{\partial x_d}\right)^{-1} \grad ({\phi_i}_* h)$.
Set $W \coloneqq \bigcap_{i \in I} V_i$.

Let $h \in W$ and let $j \colon \calZ_g \hookrightarrow M$ be the inclusion.
Then,
\begin{align*}
\vol(\calZ_h)
&= \sum_{i \in I} \int_{U_i \cap \calZ_h} j^*(\psi_i \: \vol_M)\\
&= \sum_{i \in I} \int_{\intervOO{0}{1}^{d-1}} \sqrt{1+\norm{\grad h}^2} {\phi_i}_* j^*(\psi_i \: \vol_M)\\
&= \sum_{i \in I} \int_{\intervOO{0}{1}^{d-1}} \sqrt{1+\left(\frac{\partial({\phi_i}_* h)}{\partial x_d}\right)^{-2} \norm{\grad ({\phi_i}_* h)}^2} (\pr_{d-1} \circ \phi_i)_*(\psi_i \: \vol_M)
\end{align*}
where $\pr_{d-1} \colon \R^d \to \R^{d-1}$ is the projection on the first $d-1$ components.
On this expression, the continuity of $\vol(\calZ_-)$ for the uniform $\calC^1$-topology is clear.

(v)
We now prove the Lipschitz continuity statement.
Let $(f, 0) \in \Reg_\pb^1(M, \R)$.
By Lemma~\ref{lem:neighb}, there exists an open neighborhood $V$ of $f$ in the uniform $\calC^0$-topology such that 
Let $K$ be a compact neighborhood of $\calZ_f$ such that $\epsilon \coloneqq \frac12 \min \big(\inf \{ \abs{f(x)} \mid x \in M \setminus K \}, \inf \{ \norm{\grad f(x)} \mid x \in K \} \big) > 0$.
Let $h, k \in \calC_{0-\reg, c}^1(M, \R) \cap (f + U_1(\epsilon))$.
Then, $M_h^0 \triangle M_k^0 \subseteq K$.
Furthermore, $M_h^0 \triangle M_k^0$ is included in a tubular neighborhood of $\calZ_h$ of radius $\frac1{\epsilon}\norm{h-k}_\infty$.
Indeed, let $x \in M_h^0 \triangle M_k^0$.
One can suppose that $k(x) \leq 0 \leq h(x)$.
Therefore, $0 \leq h(x) \leq \norm{h-k}_\infty$.
Consider the maximal integral curve $c$ defined by $c(0) = x$ and $c'(t) = - \grad h(c(t))$.
Let $T \coloneqq \inf \{ t \geq 0 \mid c(t) \in \calZ_h \}$.
One has $c(\intervCC{0}{T}) \subseteq K$.
Since $\norm{\grad h(c(t)} \geq \norm{\grad f(c(t))} - \epsilon \geq \epsilon$, one has $d(c(0), c(T)) \leq \frac1\epsilon \norm{h-k}_\infty$.

Since by the previous part of the proof, the nodal volume is continuous, $\vol(\calZ_h)$ is bounded, say by $A > 0$, on a $\calC^1$-neighborhood $W$ of $f$.
Therefore, if $h, k \in \calC_{0-\reg, c}^1(M, \R) \cap (f + U_1(\epsilon)) \cap W$, then $\vol(M_h^0 \triangle M_k^0) \leq \frac{A'}{\epsilon} \norm{h-k}_\infty$ where $A' > 0$ only depends on $A$ and $(M, g)$.
\end{proof}

\begin{remark}
The volume function is not uniformly continuous, as the pairs of constant functions equal to $\pm \frac1n$ on a nonempty compact manifold~$M$ show: the volume of the 0-sublevel set jumps from 0 to $\vol(M)$ for two arbitrarily close functions.
Similar examples can be given for any nonzero intrinsic volume.
\end{remark}

\begin{remark}
The proof shows that the first two functions are actually pointwise Lipschitz continuous when the domain is given the uniform $\calC^0$-topology.
\end{remark}

\begin{theorem}
\label{thm:continuity}
Let $(M, g)$ be an $n$-dimensional Riemannian manifold.
If $0 \leq k \leq n$, then the function
\begin{alignat}{2}
\calL_{n-k}^\mathrm{sub} \colon&& \Reg_\pb^3(M, \R)_U &\longrightarrow \R\\
&&(f, a) & \longmapsto \calL_{n-k}(M^a_f)\notag
\shortintertext{is Lipschitz continuous, and the function}
\calL_{n-k}(M^0_-) \colon&& \calC^3_{0-\reg, c}(M, \R)_S &\longrightarrow \R\\
&&f &\longmapsto \calL_{n-k}(M^0_f)\notag
\end{alignat}
is continuous.
\end{theorem}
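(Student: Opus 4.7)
The plan is to apply Theorem~\ref{thm:main2} to write $\calL_{n-k}(M_f^a)$ as an integral over $M_f^a$ of an integrand
\begin{equation*}
I_{f,a} \coloneqq a_k \trace\bigl(\extpow{k/2} R\bigr) + \sum_{m=0}^{\floor*{(k-1)/2}} b_{k,m}\, \div\!\left( \frac{P_{n,k,m}(R, \Hess f, \grad f)}{\eta_{f-a,\,3k-2(m+1)}}\, \grad f \right),
\end{equation*}
which is a rational expression in $a$ and in the $3$-jet of $f$ whose denominator is a positive power of $\eta_{f-a,\ell}$. The case $k=0$ is the volume statement in Proposition~\ref{prop:volume}, so I will assume $k \geq 1$. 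For two nearby points $(f,a), (g,b)$, I will use the ``fixed-domain / moving-boundary'' decomposition
\begin{equation*}
\calL_{n-k}(M_f^a) - \calL_{n-k}(M_g^b) = \int_{M_f^a \cap M_g^b} (I_{f,a}-I_{g,b})\,\vol_M + \int_{M_f^a \setminus M_g^b} I_{f,a}\,\vol_M - \int_{M_g^b \setminus M_f^a} I_{g,b}\,\vol_M,
\end{equation*}
which reduces the problem to (i)~a uniform Lipschitz-in-$(f,a)$ bound on $I_{f,a}$ over a fixed compactum, and (ii)~a Lipschitz bound on $\vol(M_f^a \triangle M_g^b)$, already furnished by Proposition~\ref{prop:volume}.

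To establish (i), fix $(f_0,a_0) \in \Reg_\pb^3(M,\R)$. Since $f_0$ is proper and bounded below, I pick $\epsilon>0$ so that $K \coloneqq M_{f_0}^{a_0+\epsilon}$ is compact; then $M_f^a \subseteq K$ whenever $\norm{f-f_0}_\infty + \abs{a-a_0} < \epsilon/2$. Regularity of $a_0$ for $f_0$ forces $\eta_{f_0-a_0,\ell}$ to be everywhere positive, hence bounded below by some $c_0>0$ on the compact set $K$. The Lipschitz continuity of $\eta$ for the uniform $\calC^1$-topology (proved earlier in this section) then yields a neighborhood $V$ of $(f_0,a_0)$ on which $\eta_{f-a,\ell} \geq c_0/2$ uniformly on $K$. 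Combined with the uniform $\calC^3$-bounds on $f$ over $V$, this gives constants $C, L > 0$ such that $\sup_K \abs{I_{f,a}} \leq C$ and $\sup_K \abs{I_{f,a} - I_{g,b}} \leq L\bigl(\norm{f-g}_{\calC^3} + \abs{a-b}\bigr)$ for all $(f,a), (g,b) \in V$. Injecting these bounds into the decomposition and using Proposition~\ref{prop:volume} yields the required Lipschitz estimate for $\calL_{n-k}^\mathrm{sub}$.

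The second (strong-topology) statement will follow from the same decomposition: strong $\calC^3$-convergence $g \to f_0$ entails uniform $\calC^3$-convergence on the compact set $K$, so the uniform bounds on $I$ obtained above persist, and the continuity statements in Proposition~\ref{prop:volume} control the symmetric-difference terms.

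The main technical obstacle is the uniform positive lower bound on the denominator $\eta_{f-a,\ell}$: this quantity vanishes exactly at critical zeros of $f-a$, so the argument must combine the openness of $\Reg_\pb^3$ in $\calC^3_U \times \R$ with the $\calC^1$-Lipschitz continuity of $\eta$ to propagate the strict positivity of $\eta_{f_0-a_0,\ell}$ on $M_{f_0}^{a_0}$ to a full $\calC^1$-neighborhood of $(f_0,a_0)$, throughout which the sublevel sets $M_f^a$ stay inside the fixed compactum $K$.
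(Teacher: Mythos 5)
Your proposal is correct and follows essentially the same route as the paper: write $\calL_{n-k}(M^a_f)$ via Theorem~\ref{thm:main2}, split the difference into an integral over the intersection of the two sublevel sets (controlled by a Lipschitz bound on the integrand, which the paper isolates as a separate lemma on the functions $Y_{n,k,m}$ and you obtain by bounding $\eta_{f-a,\ell}$ below on a fixed compactum) plus integrals over the symmetric difference (controlled by Proposition~\ref{prop:volume}). The only cosmetic difference is that the paper first reduces to $a=b=0$ via $M^b_f = M^a_{f+a-b}$ instead of carrying $a$ through the estimates.
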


\begin{remark}
By Remark~\ref{rmk:C2}, the functions $\calL_{n-1}^\mathrm{sub}$ and $\calL_{n-1}(M^0_-)$ are also defined for $\calC^2$-functions, and the proof below applies.
For mere continuity in the proper case, the result even holds for $\calC^1$-functions, as proved in Proposition~\ref{prop:volume}.
\end{remark}

\begin{remark}
Since the Euler--Poincaré characteristic is an integer, one obtains that $\chi(M_-^-)$ is locally constant on $\Reg_\pb^3(M, \R)_U$ and $\chi(M_-^0)$ is locally constant on $\calC^3_{0-\reg, c}(M, \R)_S$.
\end{remark}

\begin{lemma}
Let $(M, g)$ be an $n$-dimensional Riemannian manifold and $p \in \overline{\N}$.
Let $n, k, m \in \N$.
The function
\begin{equation}
\begin{aligned}
Y_{n, k, m} \colon \Reg^{p+3}(M, \R)_U &\longrightarrow \calC^p(M, \R)_U\\
(f, a) & \longmapsto \div \left( \frac{P_{n, k, m}(R, \Hess(f), \grad f)}{\eta_{f-a, 3k-2(m+1)}} \grad f \right)
\end{aligned}
\end{equation}
is Lipschitz continuous.
The function $Y_{n, k, m}(-, 0) \colon \calC^{p+3}_{0-\reg}(M, \R)_S \to \calC^p(M, \R)_S$ is continuous.
\end{lemma}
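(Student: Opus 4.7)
My plan is to write $Y_{n,k,m}$ as a composition and apply the chain rule: $Y_{n,k,m} = \div \circ \Phi_* \circ J$, where
\[
J \colon (f,a) \longmapsto (f-a,\,\grad f,\,\Hess f)
\]
is a continuous linear jet map into a product of section spaces (the curvature $R$ of the ambient metric being treated as a fixed smooth datum), the fiberwise algebraic map
\[
\Phi(u, v, h) \coloneqq \frac{P_{n,k,m}(R, h, v)}{\sqrt{u^{2\ell} + \norm{v}^{2\ell}}}\,v, \qquad \ell \coloneqq 3k - 2(m+1),
\]
is defined on the open subbundle $\Omega \coloneqq \{(u,v,h) : (u,v) \neq (0,0)\}$ over $M$, $\Phi_*$ is its induced pointwise map on sections, and $\div \colon \Gamma^{(p+1)}(TM)_U \to \calC^p(M,\R)_U$ is a continuous linear map. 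Thus the main task will reduce to establishing the Lipschitz (resp.\ strong-continuity) property for the section-level map $\Phi_*$.

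On any closed subset $K \subset \Omega$ that is bounded and bounded away from the singular locus $\{(u,v)=(0,0)\}$, the rational map $\Phi$ is smooth with uniformly bounded partial derivatives of all orders, so $\Phi_*$ acts Lipschitz-continuously in the uniform $\calC^q$-topologies on sections with pointwise values in $K$. For $(f_0,a_0) \in \Reg^{p+3}(M,\R)$, the regular-value hypothesis gives the pointwise positivity $\eta_{f_0 - a_0,\ell} > 0$; combined with the Lipschitz continuity of $\eta$ from the preceding lemma, a sufficiently small uniform $\calC^{p+3}$-neighborhood $V$ of $(f_0,a_0)$ will have the property that the jets $J(f,a)$ for $(f,a) \in V$ all take pointwise values in a common admissible $K$. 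Composition then yields the asserted Lipschitz continuity of $Y_{n,k,m}$ on $V$.

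For the strong-topology statement, I would use the same compositional structure, replacing each Lipschitz step by its strong-topology continuity counterpart; crucially, the strong topology allows neighborhoods to shrink at infinity, which is what preserves a pointwise lower bound $\eta_{f,\ell} > 0$ over a neighborhood of $f_0$ when $\eta_{f_0,\ell}$ decays to zero on noncompact~$M$. The main obstacle in both cases is the second step: securing a uniform lower bound on $\eta$ over a whole neighborhood of $(f_0,a_0)$. In the uniform case this requires $\inf_M \eta_{f_0 - a_0,\ell} > 0$, while in the strong case the flexibility of the strong neighborhoods circumvents this, at the cost of tracking the decay of $\eta$ at infinity through each factor of the composition.
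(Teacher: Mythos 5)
Your decomposition $Y_{n,k,m} = \div \circ \Phi_* \circ J$ is essentially the paper's own argument, which factors $Y_{n,k,m}$ through the four maps $\tau$, $\eta_{-,\,3k-2(m+1)}$, $f \mapsto P_{n,k,m}(R,\Hess(f),\grad f)\,\grad f$, and $\div$, each Lipschitz continuous in the uniform topology and continuous in the strong topology. You are in fact more explicit than the paper (whose proof is a one-line appeal to these four maps) about the one genuinely delicate point, namely securing a lower bound on $\eta$ over a whole neighborhood, which on noncompact $M$ does not follow from pointwise positivity alone; so your sketch matches and slightly sharpens the published proof.
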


\begin{proof}
The result follows from the continuity of the four functions
\begin{align*}
\tau \colon \Reg^{p+3}(M, \R)_U &\longrightarrow \calC^{p+3}_{0-\reg}(M, \R)_U,\\
\eta_{-, 3k-2(m+1)} \colon \calC^{p+3}_{0-\reg}(M, \R)_X &\longrightarrow \calC^{p+2}(M, \R_{>0})_X,\\
P_{n, k, m}(R, \Hess(-), \grad -) \grad - \colon \calC^{p+3}(M, \R)_X &\longrightarrow \frakX^{p+1}(M)_X,\\
\div \colon \frakX^{p+1}(M)_X &\longrightarrow \calC^p(M, \R)_X
\end{align*}
for $X = U, S$, with Lipschitz continuity when $X = U$.
\end{proof}

\begin{proof}[Proof of the theorem]
Because of the general identity $M^b_f = M^a_{f+a-b}$, it is sufficient to consider $(f, 0), (g, 0) \in \Reg_\p^3(M, \R)$.
By Formula~\eqref{eq:main}, one has
\begin{multline}
\abs*{\calL_{n-k}(M^0_g) - \calL_{n-k}(M^0_f)} \leq
\abs*{\int_{M^0_g} \trace\left( \extpow{k/2} R \right) \vol_M - \int_{M^0_f} \trace\left( \extpow{k/2} R \right) \vol_M} + {}\\
\sum_{m=0}^{\floor*{\frac{k-1}{2}}} \abs{b_{k, m}}
\abs*{\int_{M^0_g} Y_{n, k, m}(g, 0) \vol_M - \int_{M^0_f} Y_{n, k, m}(f, 0) \vol_M}.
\end{multline}
As for the first term, the integrand $\trace\left( \extpow{k/2} R \right)$ is locally bounded on $M$, and $\vol(M^0_g \triangle M^0_f)$ is controlled by $d(f, g)$ by Proposition~\ref{prop:volume}.

As for the summands, it suffices to bound $\int_{M^0_g \triangle M^0_f} \max(\abs*{Y_{n, k, m}(f, 0)}, \abs*{Y_{n, k, m}(g, 0)}) \vol_M$ and $\int_{M^0_f \cap M^0_g} \abs*{Y_{n, k, m}(g, 0) - Y_{n, k, m}(f, 0)} \vol_M$.
The first term is dealt with by Proposition~\ref{prop:volume} and the second by the lemma.
\end{proof}

\vspace*{5mm}
\noindent
\parbox[t]{21em}
{\scriptsize
Beno{\^i}t Jubin\\
Sorbonne Universit{\'e}\\
Institut de Math{\'e}matiques de Jussieu\\
F-75005 Paris France\\
e-mail: benoit.jubin@imj-prg.fr
}
\end{document}